\newtheorem{theorem}{Theorem}[section]
\newtheorem{lemma}[theorem]{Lemma}
\newtheorem{proposition}[theorem]{Proposition}
\newtheorem{corollary}[theorem]{Corollary}
\theoremstyle{definition}
\newtheorem{definition}[theorem]{Definition}
\newtheorem{example}[theorem]{Example}
\newtheorem{notation}[theorem]{Notation}
\theoremstyle{remark}
\newtheorem{remark}[theorem]{Remark}
\newcommand{\cA}{{\mathcal A}}
\newcommand{\cB}{{\mathcal B}}
\newcommand{\cC}{{\mathcal C}}
\newcommand{\cD}{{\mathcal D}}
\newcommand{\cF}{{\mathcal F}}
\newcommand{\cH}{{\mathcal H}}
\newcommand{\cK}{{\mathcal K}}
\newcommand{\cL}{{\mathcal L}}
\newcommand{\cO}{{\mathcal O}}
\newcommand{\cP}{{\mathcal P}}
\newcommand{\bC}{{\mathbb C}}
\newcommand{\bR}{{\mathbb R}}
\renewcommand{\d}{\delta}
\newcommand{\D}{\Delta}
\renewcommand{\O}{\Omega}
\newcommand{\s}{\sigma}
\newcommand{\<}{\langle}
\renewcommand{\>}{\rangle}
\numberwithin{equation}{section}
\begin{document}

\title[Quantum Hamming metrics ]
{Quantum Hamming metrics}

\author{Marc A. Rieffel}
\address{Department of Mathematics\\
University of California\\
Berkeley, CA\ \ 94720-3840}
\email{rieffel@math.berkeley.edu}
   
\subjclass[2020]{
Primary 58B34; Secondary 81P45 46L05}

\keywords{quantum, metric, states, Hamming, Kantorovich, Wassertein}

\large{

\begin{abstract}
Given the set of words of a given length for a given alphabet, the Hamming metric between two such words is the number of positions where the two words differ. A quantum version of the corresponding Kantorovich-Wasserstein metric on states was introduced in 2021 by De Palma, Marvian, Trevisan and Lloyd. For the quantum version the alphabet is replaced by a full matrix algebra, and the set of words is replaced by the tensor product of a corresponding number of copies of that full matrix algebra. While De Palma et al. work primarily at the level of states, they do obtain the corresponding seminorm (the quantum Hamming metric) on the algebra of observables that plays the role of assigning Lipschitz constants to functions. A suitable such seminorm on a unital C*-algebra is the current common method for defining a quantum metric on a C*-algebra. 

In this paper we will reverse the process, by first expressing the Hamming metric in terms of the C*-algebra of functions on the set of words, and then dropping the requirement that the algebra be commutative so as to obtain the quantum Hamming metric. From that we obtain the corresponding Kantorovich-Wasserstein metric on states. Along the way we show that many of the steps can be put in more general forms of some interest, notably for infinite-dimensional C*-algebras.
\end{abstract}

\maketitle


\section{Introduction}

In  2021 De Palma et al. published a paper \cite{DMTL} in which they construct  quantum versions of the Kantorovich-Wasserstein metrics on probability measures corresponding to Hamming metrics. They present many basic properties of these quantum versions, and indicate strong connections with a variety of important topics in quantum information theory. Their paper has spawned a large number of papers that apply the ideas in their paper in many directions. 
See \cite{Btt, dPKP, 
dPR, dPT, HlR, KdML, MoP, QCZ, RzF, TrvL, TrvI, ZDK, HRF} 
and the references they contain.
In the course of their paper the authors obtain a quantum version of the Hamming metrics themselves.

In the present paper we reverse the process, and start with 
Hamming metrics themselves, and provide a natural path 
that leads to the definition of quantum Hamming metrics. 
With this definition in hand, one is led quite naturally to the
corresponding quantum Kantorovich-Wasserstein metrics on 
the state spaces of the underlying quantum systems, 
which in \cite{DMTL} are taken as the \emph{definition} of the quantum 
Kantorovich-Wasserstein metrics (and which they call
``The quantum Wasserstein distance of order 1''). 
In the present paper we put 
various of the steps into suitable more general contexts, 
which provide more perspective for the steps. In particular,
we show that many (but not all) of the steps work well
for arbitrary, possibly infinite-dimensional, unital C*-algebras. 
We also show how the 
quantum Hamming metrics can be expressed in terms of
Dirac operator type structures.

Here is a quick outline of our approach. We note first that the Hamming 
metric comes from first putting the complete-graph metric on each 
alphabet in use, and then using the corresponding  sum-metric on 
the cartesian product of the alphabets.
We then show how to express these two steps in terms of 
the corresponding algebras
of functions (the algebras of ``observables''), so that the Hamming metric
is expressed entirely in terms of these algebras. Since this reformulation 
does not mention points of spaces, nor use the commutativity of the algebras, 
we can replace the algebras with ones that are not commutative, such as full 
matrix algebras, to obtain our quantum version of the Hamming metrics. The 
metric data is then given by a seminorm that plays the role of attaching the 
Lipschitz constants to functions on a metric space. We also show
how to express the metric data in terms of ``Dirac operators'', from which
a few additional properties become evident. From the
formulas for the quantum Hamming metrics we then obtain the formulas
for the corresponding Kantorovich-Wasserstein metrics on states, 
that in \cite{DMTL} are taken as the \emph{definition} of the quantum
Wasserstein-1 metrics on states.

Necessarily many of the results in the present paper are 
counter-parts, perhaps with some
generalization, of results in \cite{DMTL}.

Kuperberg and Weaver \cite{KpW} have introduced a different definition
of quantum metric spaces, motivated by quantum error correction, and
formulated in the setting of von Neumann algebras.
They call their metric structure a ``W*-metric''.
 We sketch here what
it looks like just in the finite-dimensional case, which is sufficient for their
quantum Hamming metric, for which see \cite{KpW} and 
example 11.30 in \cite{Wvr4}. 
For a finite-dimensional Hilbert space $\cH$ and  a C*-algebra $\cA$ of 
operators on it, they define a W*-metric on $\cA$
to be a filtration $V_t$ of operator systems (i.e. self-adjoint subspaces
containing the identity operator) in $\cB(\cH)$ for
$t$ ranging over a finite set of non-negative numbers.
The filtration is required to satisfy the properties that 1) $V_0$ is
the commutant of $\cA$ in $\cB(\cH)$, 2) $V_s V_t \subseteq V_{s+t}$.
The intuition is that $V_t$ consists of operators that displace
mass by at most distance $t$ (or introduce at most $t$ errors). 
The term ``finite-propagation 
operators'' is sometimes used. 
The authors show that their W*-metrics can sometimes lead to 
C*-metrics as used in our present paper, and that Dirac-type
constructions lead to W*-metrics. But very little is said about
states, so the W*-metric approach appears to go in a 
direction quite different than that of the 
paper \cite{DMTL} of De Palma et al.
that we study here.

In \cite{DMTL} the authors present many important properties
and applications of their
Wasserstein-1 distance on states, which, as indicated
above, have been widely explored and extended in other papers both
by the authors and by many other researchers. We will
not discuss any of this in the present paper, but there
is much that well warrants
exploration in the style of the present paper.

\tableofcontents


\section{Hamming metrics}
\label{secprim}

In this section we begin the discussion of ordinary 
Hamming metrics, aiming to put them in
a form suitable for suggesting the quantum versions 
that we will discuss later.
The setting is as follows.
We are given a finite set with $n$ elements, $I$, 
whose elements we use as indices.
For each $i \in I$ we are given a finite set, $X_i$,
called the alphabet at position $i$. 
(For the quantum Hamming metric, for each $i$ 
our alphabet can be a quantum
finite set, that is, a full matrix algebra 
$\cA_i = M_{m_i}(\bC)$ with its structure as a C*-algebra.) 
Often it is assumed that all
these alphabets are the same set, but we do not need
to make this assumption. We then consider ``words'' 
of length $n$ with the $i$th entry taken from $X_i$
for each $i$. The set of these words can be
identified with the product set $X = \Pi_i X_i$.
 (In the quantum setting there are no words, but
 since $C(X) = \otimes C(X_i)$,
in the quantum setting we will use
the C*-algebra $\otimes \cA_i$.) The 
Hamming metric on $X$ is the metric that sets
the distance between two words to be the
number of indices $i$ for which the $i$-th entry of
the two words is different.
This suggests that on each $X_i$ we let $d_i$
be the metric that gives distance 1 to all pairs
of distinct elements of $X_i$. 

Given metrics on several sets, there is a large
number of ways of using them to define a metric 
on the product of those sets. The definition of
the Hamming metric suggests that we use
the ``sum-metric'', that is
the sum of the metrics. Thus we define
the metric $d_H$ on $X$ by
\[
d_H(a, b) = \sum_i d_i(a_i,b_i)
\]
for all $a, b  \in X$. It is easily checked that
this is indeed the Hamming metric. For the more general
case in which the $X_i$'s are general metric
spaces (usually compact in later sections), with arbitrary metrics
$d_i$, we will denote the sum-metric on their
product by $d_S$.

To try to see what the quantum version of the
Hamming metrics might be, we follow the often-used
path of expressing the situation in
terms of the of the algebra of functions on
$X$, and then dropping the requirement that
the algebra be commutative.

For any compact metric space $(X,d)$ we denote
the algebra of continuous $\bC$-valued functions
on $X$ by $C(X)$. There are quantum physics experiments
which are considered to indicate that nature knows
about $i = \sqrt {-1}$ and that $i$ is necessary for
modeling quantum phenomena. See \cite{RAN}
and the references in \cite{LnP}.) For this reason
we will usually use $\bC$-valued functions, and
matrix algebras over $\bC$.
For any $f \in C(X)$ we let
$L^d(f)$ be its Lipschitz constant, defined by
\begin{equation}
\label{lipsch}
L^d(f) = \sup \{|f(x) - f(y)|/d(x,y): x, y\in X, \quad x \neq y\}
\end{equation} 
(which may be $+\infty$).
We let $Lip_d(X)$ be the dense sub-algebra of $C(X)$
consisting of functions $f$ such that $L^d(f) < \infty$, that is, are
Lipschitz for $d $. Thus $L^d$, as defined in equation
\ref{lipsch}, restricts to a finite-valued seminorm on $Lip_d(X)$. 
But aside from the fact that $L^d$ can take value $+\infty$
on $C(X)$, 
it has in general quite nice properties
\cite{R4, R5, R21}. These properties
motivate the following definition (see definition 4.1 of \cite{R21}), 
which will be the context for
our treatment of the quantum situation. A proof of property
4 below for compact metric spaces will follow 
from our later Theorem \ref{predual}.

\begin{definition} 
\label{deflip}
Let $\cA $ be a unital C*-algebra, 
and let $ L $ be a seminorm on $\cA $ 
that may take the value $+\infty $. We say that
$ L$ is a \emph{C*-metric} if it satisfies the following properties: 
\begin{enumerate}
\item For any $a\in \cA $ we have 
$ L (a) = 0 $ if and only if $a \in \bC 1_\cA $, and
$L$ is a $*$-seminorm in the sense that
$ L (a^*) = L (a) $.
\item $\cL_L := \{a: L (a) < +\infty\}$ is dense in $\cA $ 
(and so is a dense $*$-subspace of $\cA$).
\item $L$ satisfies the Leibniz inequality
\begin{equation*}
\label{eqleib}
L(ab) \leq L(a)\|b\| + \|a\| L(b)  
\end{equation*}
for all $a,b \in \cA$. (So $\cL_L$ is a $*$-subalgebra of $\cA$.)
\item Define a metric, $d^L $, on the state space
$ S (\cA) $ of $\cA$ by
\begin{equation}
\label{distance}
d^L (\mu, \nu) =\sup \{|\mu (a) -\nu (a)|: a \in \cA, \ \ L (a)\leq 1 \}  .
\end{equation}
 We require that the topology 
on $ S (\cA) $ determined by this metric agrees with the 
weak-$*$ topology (so it is compact).
\item $L$ is lower semi-continuous with respect to the 
operator norm.
\end{enumerate}
We will call a pair $(\cA, L)$ with $\cA$ a unital C*-algebra
and $L$ a C*-metric on $\cA$ a \emph{compact C*-metric space}.
If a Leibniz seminorm also satisfied the property that if 
$a \in \cA$ is invertible in $\cA$
then $L(a^{-1}) \leq \|a\|^{-2}L(a)$, as discussed 
in sections 1 and 2 of \cite{R21}), then we say that $L$ is
\emph{strongly Leibniz}.
\end{definition}

\begin{remark}
\label{remar}
A priori $d^L$ can take the value $+\infty$. 
But that can not happen if property (4) holds.
Also, because $L$ is a $*$-seminorm, 
it suffices to take the supremum in property (4)
only over self-adjoint $a$'s. To see this, note that
if we let $\phi = \mu - \nu$ then $\phi(a^*) = \overline{\phi(a)}$ for
all $a \in \cA$. Then for any given $a$, we can multiply it
by a complex number of modulus 1 in such a way that
$\phi(a) \geq 0$ and $L(a)$ is unchanged. Then
$\phi((a  + a^*)/2) = \phi(a)$.  
\end{remark}

One of the consequences of Theorem \ref{predual} below
is a proof of the well-known fact that
for any compact 
metric space $(X,d)$, the pair $(C(X), L^d)$
is a compact C*-metric space.
 
 Accordingly, for $X = \Pi_i X_i$ with its
 Hamming metric $d_H$ as defined earlier, we will
 provisionally take $L_H$,
as defined by equation \ref{lipsch}, to
 be the corresponding C*-metric on $C(X)$.
 But we will see later that when working with $\bC$-valued
 functions this is not entirely satisfactory.
 
 We need to explore some of the properties of Hamming metrics. 
 We begin with one simple property. For any
 finite set $X$ let $d^1$ be the metric that gives distance 1
 to any pair of distinct points, and let $L^1$ be the
 corresponding C*-metric on $C(X)$.
 
 \begin{proposition}
 \label{bound}
 Let $X = \Pi_{ i= 1}^n X_i$ as before, where the $X_i$'s
 are finite sets.
 Then the diameter of $(X, d_H)$ is no more than $n$, and
$ L^1 \leq nL_H \leq nL^1$ \ . If each $X_i$ contains at
least 2 points, then the diameter of $(X, d_H)$ is $n$.
 \end{proposition}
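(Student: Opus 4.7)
The plan is to handle the three claims in order, all of which follow quickly from the defining formula $d_H(a,b) = \sum_i d_i(a_i,b_i)$ together with the observation that each $d_i$ takes values in $\{0,1\}$.

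For the diameter bound, I would simply note that for any $a,b \in X$, each summand $d_i(a_i,b_i)$ is at most $1$, so $d_H(a,b) \le n$. For the sharpness claim under the hypothesis that each $X_i$ has at least two elements, pick $a_i \ne b_i$ in each $X_i$; then the resulting words $a = (a_i)$ and $b = (b_i)$ satisfy $d_i(a_i,b_i) = 1$ for every $i$, so $d_H(a,b) = n$.

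For the seminorm inequalities $L^1 \le n L_H \le n L^1$, the point is to compare the Lipschitz ratios $|f(x)-f(y)|/d_H(x,y)$ and $|f(x)-f(y)|/d^1(x,y)$ for distinct $x,y \in X$. Since $d^1(x,y) = 1$ whenever $x \ne y$ and $1 \le d_H(x,y) \le n$ by the first part, we have the chain
\[
\frac{|f(x)-f(y)|}{n} \;\le\; \frac{|f(x)-f(y)|}{d_H(x,y)} \;\le\; |f(x)-f(y)| \;=\; \frac{|f(x)-f(y)|}{d^1(x,y)}.
\]
Taking suprema over distinct pairs $x,y$ gives $L^1(f)/n \le L_H(f) \le L^1(f)$, which rearranges to the stated inequality.

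There is no real obstacle here; the proposition is a warm-up that records how the Hamming metric compares with the discrete metric $d^1$ on $X$, so that later statements about $L_H$ and $L^1$ on (quantum) algebras can be freely interchanged up to the factor $n$.
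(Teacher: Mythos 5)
Your proof is correct and follows essentially the same route as the paper: both arguments rest on the observation that $d_H$ takes values between $1$ and $n$ on distinct pairs, giving the chain of ratio inequalities whose suprema yield $L^1 \leq nL_H \leq nL^1$, and both exhibit a pair of words differing in all $n$ positions to get sharpness of the diameter. No issues.
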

 
 \begin{proof}
 A moment's thought shows that the metric $d_H$
 takes values between $1$ and $n$. Thus for
 any $f\in C(X)$ and any $x, y \in X$ we have
 \[
  |f(x) - f(y)| \geq |f(x) - f(y)|/d_H(x,y)  \geq (1/n) |f(x) - f(y)|   ,
 \]
 from which the first statement follows immediately. For
 the second statement, if each $X_i$ contains at least 2
 points, then there are at least two elements of $X$ that
 differ from each other in $n$ places, and so have
 Hamming distance $n$.  
 \end{proof} 
 

\section{Sum-metrics}
\label{sump}

We examine first
sum-metrics on products of compact metric spaces.
So for each $i \in I$ let $(X_i, d_i)$ be a compact
metric space, let  $X = \Pi_i X_i$ with the product topology, 
and let $d_S$
be the sum-metric on $X$. For each $i \in I$
define $L_i$ on $C(X)$ by
\begin{equation}
\label{partial}
L_i(f) = \sup \{|f(x) - f(y)|/d_i(x_i,y_i): x \neq y\in X, \ \ x_j 
=  y_j \  \mathrm{for} \ j \neq i\}.
\end{equation}
This can be considered to be a rough analog of the partial
derivative of $f$ in the $i$ direction. As usual, $L_i(f)$
can be $+\infty$. We record here an easily verified fact that
will be quite important later:

\begin{proposition}
\label{nullj}
With notation as above, let $\tilde X_i = \Pi \{X_j: j \neq i\}$, 
so that $X = X_i \times \tilde X_i$. 
Then the null-space of $L_i$ 
consists of the functions that are constant
in the $i^{th}$ variable, and so this null-space is $C(\tilde X_i)$,
where  $C(\tilde X_i)$ is
viewed as a subalgebra of $C(X)$ via the natural projection
from $X$ onto $\tilde X_i$.
\end{proposition}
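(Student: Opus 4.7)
The plan has two steps: first, characterize the null-space of $L_i$ as the set of functions on $X$ that are constant in the $i$-th variable; second, identify those functions with $C(\tilde X_i)$ via pullback along the natural projection.

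For the first step I would unpack the definition \ref{partial} directly. If $f$ is constant in the $i$-th variable, then for every admissible pair $(x,y)$ (those with $x_j = y_j$ for $j \neq i$) one has $f(x) = f(y)$, so every quotient in the supremum is $0$ and $L_i(f) = 0$. Conversely, if $L_i(f) = 0$, then for any $x, y \in X$ with $x_j = y_j$ for $j \neq i$ and $x_i \neq y_i$ one has $d_i(x_i, y_i) > 0$, so $L_i(f) = 0$ forces $f(x) = f(y)$; together with the trivial case $x = y$, this says exactly that $f$ does not depend on the $i$-th coordinate.

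For the second step I would use the product decomposition $X = X_i \times \tilde X_i$ (with the product topology) and the continuous projection $\pi_i : X \to \tilde X_i$. A function $f \in C(X)$ is constant in the $i$-th variable precisely when it factors as $f = g \circ \pi_i$ for a unique $g : \tilde X_i \to \bC$. To see $g$ is continuous, pick any $x_i^0 \in X_i$ (assuming $X_i \neq \emptyset$, else $X = \emptyset$ and there is nothing to prove); then $g(\tilde x) = f(x_i^0, \tilde x)$ exhibits $g$ as the composition of $f$ with the continuous inclusion of the slice $\{x_i^0\} \times \tilde X_i \hookrightarrow X$. The assignment $g \mapsto g \circ \pi_i$ then realizes $C(\tilde X_i)$ as the null-space of $L_i$ inside $C(X)$.

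Nothing in this argument is truly obstructive; it is an unwinding of the definition of $L_i$ together with the product topology. The only mild subtlety worth flagging as a potential sticking point is the continuity of the factored function $g$, which is handled by the slice argument above and which implicitly relies on the product topology (and on nonemptiness of $X_i$).
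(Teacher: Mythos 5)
Your argument is correct; the paper states this proposition without proof (as "an easily verified fact"), and your direct unwinding of the definition of $L_i$ together with the slice argument for continuity of the factored function is exactly the intended verification.
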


The following result, and later its quantum version, 
are central to this paper.

\begin{theorem}
\label{summet}
For each $i \in I$ let $(X_i, d_i)$ be a compact
metric space, let  $X = \Pi_i X_i$, and let $d_S$
be the sum-metric on $X$.  Let $L^{d_S}$ be the
Lipschitz seminorm for $d_S$, and for each $i$
let $L_i$ be as defined in equation \eqref{partial}. Then
\[
L^{d_S}(f) = \max\{ L_i(f): i \in I\}
\]
for all $f \in C(X)$. In particular, $f \in Lip_{d_S}(X)$ exactly if
$L_i(f) < \infty$ for each $i \in I$.
\end{theorem}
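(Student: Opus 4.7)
The proof is a two-way inequality, and the key idea for the harder direction is a telescoping interpolation through the coordinates.

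\medskip

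\noindent\textbf{Easy direction: $\max_i L_i(f) \leq L^{d_S}(f)$.}
For any fixed $i$ and any pair $x,y \in X$ with $x_j = y_j$ for $j \neq i$, the definition of the sum-metric gives $d_S(x,y) = d_i(x_i,y_i)$. Hence
\[
\frac{|f(x)-f(y)|}{d_i(x_i,y_i)} = \frac{|f(x)-f(y)|}{d_S(x,y)} \leq L^{d_S}(f).
\]
Taking the supremum over such pairs yields $L_i(f) \leq L^{d_S}(f)$, and then the maximum over $i$.

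\medskip

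\noindent\textbf{Harder direction: $L^{d_S}(f) \leq \max_i L_i(f)$.}
I would enumerate $I = \{1,\dots,n\}$ and, given $x,y \in X$, interpolate one coordinate at a time. Define $z^{(0)} = y$ and, for $1\leq k \leq n$, let $z^{(k)}$ be the point whose first $k$ coordinates agree with those of $x$ and whose remaining coordinates agree with those of $y$, so that $z^{(n)} = x$. Then consecutive points $z^{(k-1)}$ and $z^{(k)}$ agree in all coordinates except possibly the $k$-th, so by definition of $L_k$,
\[
|f(z^{(k)}) - f(z^{(k-1)})| \leq L_k(f)\,d_k(x_k,y_k),
\]
where the inequality is trivial (both sides zero) in the degenerate case $x_k = y_k$. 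Telescoping,
\[
|f(x)-f(y)| \leq \sum_{k=1}^{n} L_k(f)\,d_k(x_k,y_k) \leq \Bigl(\max_k L_k(f)\Bigr)\,d_S(x,y).
\]
If $x \neq y$, dividing by $d_S(x,y) > 0$ and taking the supremum gives $L^{d_S}(f) \leq \max_k L_k(f)$.

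\medskip

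\noindent\textbf{The ``in particular'' clause and the infinite case.} The argument above is valid whether or not the seminorms take the value $+\infty$: if some $L_i(f) = +\infty$, the inequality $L^{d_S}(f) \leq \max_i L_i(f)$ is trivial, and conversely the easy direction forces each $L_i(f) \leq L^{d_S}(f)$. Hence $L^{d_S}(f) < \infty$ precisely when all $L_i(f) < \infty$, proving the final statement.

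\medskip

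There is no real obstacle here; the only point requiring a bit of care is to notice that the terms in the telescoping sum for which $x_k = y_k$ contribute nothing (and that the corresponding bound using $L_k$ is vacuously valid in that case), so one never divides by zero when invoking the definition of $L_k$.
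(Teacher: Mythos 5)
Your proof is correct and rests on the same core idea as the paper's: telescoping from $y$ to $x$ through intermediate points that change one coordinate at a time, then bounding the resulting sum by the maximum of the $L_k(f)$'s times $d_S(x,y)$. The only difference is organizational — you carry out the full $n$-fold telescoping in one pass using $\sum_k L_k(f)\,d_k(x_k,y_k) \le \bigl(\max_k L_k(f)\bigr)\sum_k d_k(x_k,y_k)$, whereas the paper proves the two-factor case via the mediant inequality $\frac{s+t}{u+v} \le \frac{s}{u} \vee \frac{t}{v}$ and then inducts on $n$; your version is slightly more direct but not substantively different.
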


\begin{proof}
We treat first the case in which $I$ contains only two elements, 1 and 2,
and so $f$ is a function of two variables. 
For any  $s, t, u, v \in \bR^+$ with $u, v$ strictly positive we have
the inequality
\[
\frac{s+t }{u+v} \leq \frac{s}{u} \vee \frac{t}{v}   ,
\]
where ``$\vee$'' means ``maximum''.
The proof of this inequality is straight-forward, especially if one begins by
multiplying the inequality by $v/t$.

Let $f \in C(X_1 \times X_2)$.
With evident notation,
for all $a,b \in X_1$ and $p,q \in X_2$ with $a \neq b$ and $p \neq q$ we have
\[
\frac{|f(a,p) - f(b, q)|}{d_S((a,p), (b,q))} \leq
\frac{|f(a,p) - f(b,p)|+|f(b,p)- f(b,q)|}{d_1(a,b) + d_2(p,q)},
\]
which by the above inequality is
\[
\leq \frac{|f(a,p) - f(b,p)|}{d_1(a,b)} \vee \frac{|f(b,p)- f(b,q)|}{d_2(p,q)} .
\]
If instead $p=q$ then we have an evident simplified version of the inequality,
and similarly if $a = b$. 
Since the term after the last inequality is clearly no bigger than
$L^{d_S}(f)$, the proof for two spaces is complete.

We now deal with the general case by induction on the 
number, $n$, of elements in $I$. 
 We have just given
a proof for the case of $n=2$. We now assume that $n \geq 3$,
and that the conclusion holds for $n-1$. Choose an index $i_0$,
 and let $J=I \setminus \{i_0\}$. 
Let $Y=\Pi\{X_i: i \in J\}$, with corresponding sum-metric $d_S^J$.
Then $X = Y\times X_{i_0}$, and, importantly, $d_S$ is the sum-metric
of $d_S^J$ and $d_{i_0}$. Thus from the case for $n=2$ treated above,
we see that for any $f \in C(X)$ we have
\[
L^{d_S}(f) = \sup\{\frac{|f(a,p) - f(b,p)|}{d_S^J(a,b)} \}
\vee \sup \{ \frac{|f(b,p)- f(b,q)|}{d_{i_0}(p,q)}\} 
\]
where ``sup'' is over all $a,b \in Y$ and all $p, q \in X_{i_0}$.
But from the induction hypothesis that the proposition is true
for $n-1$, it is easily seen that the term
\[
\sup{\{\frac{|f(a,p) - f(b,p)|}{d_S^J(a,b)}: a, b \in Y, p \in X_{i_0}\} }
\]
is equal to $ \max\{ L_j(f): j \in J\}$,
while the term
\[
\sup\{ \frac{|f(b,p)- f(b,q)|}{d_{i_0}(p,q)}\} : b \in Y, \ \ p, q \in X_{i_0}\}
\]
is easily seen to be $L_{i_0}(f)$. Thus the ``max'' of these two terms is
$ \max\{ L_i(f): i \in I\}$ as desired
\end{proof}

It seems to me quite possible that the above theorem already appears
somewhere in the huge literature about metric spaces, but so far
I have not found it there. This comment also applies to some of
our later results about metric spaces.

Notice that when $X$ is finite, 
calculating $L^{d_S}(f)$ by calculating the $L_i(f)$'s and
using the formula of the above theorem requires substantially less
computation than is needed to calculate the Lipschitz constant of $f$
for an arbitrary metric on $X$, and the calculations of the $L_i(f)$'s
can be done in parallel over the various $i$'s.

Sum-metrics have another nice property whose quantum version is of 
some importance, and a special instance of which occurred in the 
above proof.

\begin{proposition}
\label{sumadd}
Let $\{(X_i, d_i)\}$ and 
$\{(X'_j, d'_j)\}$ be two finite collections of 
compact metric spaces. Let $X = \Pi X_i$, equipped with sum-metric
$d_S$, and let $X' = \Pi X'_j$, equipped with sum-metric $d'_S$.
Let $d''_S$ be the sum-metric on $X \times X'$ for $d_S$ and $d'_S$.
Let $Z = \Pi ( \{X_i\}\cup \{X'_j\})$, equipped with the sum metric $d^Z_S$
for $\{d_i\}\cup \{d'_j\}$. Then
\[
(Z, d^Z_S) = (X\times X', d_S'')
\]
for the evident identifications.
\end{proposition}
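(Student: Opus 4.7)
The statement is essentially the assertion that the sum-metric construction is associative, so the proof should be a direct unpacking of definitions. My plan is to fix the evident bijection $Z \to X \times X'$ which sends a tuple indexed by $\{X_i\} \cup \{X'_j\}$ to its obvious regrouping into an $X$-coordinate (the tuple of $X_i$ entries) and an $X'$-coordinate (the tuple of $X'_j$ entries). This is clearly a homeomorphism for the product topologies, so all that remains is to verify that the two metrics coincide under this identification.

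For this, I would compute both sides explicitly. Given $z, w \in Z$, write $z = (a, a')$ and $w = (b, b')$ under the identification with $X \times X'$, where $a, b \in X$ have components $a_i, b_i \in X_i$, and $a', b' \in X'$ have components $a'_j, b'_j \in X'_j$. By definition of $d^Z_S$ as the sum-metric for the combined family $\{d_i\} \cup \{d'_j\}$,
\[
d^Z_S(z, w) = \sum_i d_i(a_i, b_i) + \sum_j d'_j(a'_j, b'_j).
\]
On the other hand, by definition of $d''_S$ as the sum-metric of $d_S$ and $d'_S$,
\[
d''_S((a,a'),(b,b')) = d_S(a,b) + d'_S(a',b') = \sum_i d_i(a_i, b_i) + \sum_j d'_j(a'_j, b'_j),
\]
using the definitions of $d_S$ and $d'_S$. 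The two expressions agree term-by-term, which proves the proposition.

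There is no real obstacle here; the content of the proposition is organizational rather than analytic. The only thing worth flagging is the implicit identification at the topological level, which is immediate since the product topology on $Z$ and the product topology on $X \times X'$ both reduce to the product of all the factor topologies. Once this identification is declared, the identity of metrics is a one-line rearrangement of a finite sum.
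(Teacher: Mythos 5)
Your proof is correct and is precisely the straightforward unpacking of definitions that the paper intends; the paper itself omits the argument, remarking only that "the proof is straight-forward." Nothing further is needed.
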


The proof is straight-forward. (To simplify notation we will
often write, as above, $\{X_i\}$ instead of $\{X_i\}_{i \in I}$,
and similarly for other indexed collections.)


\section{Diameters of compact C*-metric spaces}
\label{diamstar}

Quantum physicists are very interested in states, which are the generalization
of probability measures to the setting of non-commutative C*-algebras. That is, states
are the positive linear functionals of norm 1 on a C*-algebra.
For mathematical models of quantum systems, the mathematical states
usually correspond to the possible physical (mixed) 
states of the physical quantum system
being modeled. Consequently
physicists are very interested in various notions of distance between states.
In the setting of compact metric spaces one has the
well-known definition of the Monge-Kantorovich-Wasserstein metric
on the set of probability measures on a given compact metric space.
It is also variously called the ``earth-mover metric" or the ``transportation-cost
metric'' \cite{Wvr4}. We will be exploring that metric for the case
of sum-metrics, and then in our discussion of quantum Hamming metrics.

 But in several places in the next sections we will be concerned with
 the radius and diameter of state spaces for various metrics defined
 by seminorms. 
As a convenience to the reader, we recall now some of
the discussion in section 2 of \cite{R5} concerning the 
diameter and radius of
a state space, and we relate it to compact metric spaces. 
Here, and throughout
this paper, 
for any C*-algebra $\cA$ we set $\cA^0 = \cA/\bC 1_\cA$, and
we let $L^0$ be the seminorm on 
$\cA$ that is the pull-back to $\cA$ of the quotient norm
on $\cA^0$, that is;
\begin{notation}
\label{Lzero}
$L^0(a) = \inf \{\|a  - z1_\cA\|: z \in \bC\}$
for any $a \in \cA$. 
\end{notation}

We will use the well-known fact (see lemma 2.1
of \cite{R5}, a consequence of the Jordan decomposition
for self-adjoint linear functionals on $\cA$ \cite{Pdr}) 
that the set $\{\mu - \nu: \mu, \nu \in S(\cA)\}$
coincides with the the ball $D_2$ of radius 2 in 
the annihilator of $1_\cA$ in the
dual of the space of self-adjoint elements of $\cA$. We do not
require here that the seminorms $L$ satisfy conditions (3), (4) or (5)
of Definition \ref{deflip}. 

\begin{proposition}
\label{diamet}
Let $\cA$ be a unital C*-algebra, and let $L$ be
a seminorm on $\cA$ that satisfies properties
(1) and (2) of Definition \ref{deflip}. Then it determines a metric $d^L$
on $S(\cA)$ exactly as in the formula \ref{distance}, except 
that the metric may take the value $+\infty$. Then the following
statements are equivalent for any given $ r \in \bR^+$:
\begin{enumerate}
\item  For all $\mu, \nu \in S(\cA)$ we have $d^L(\mu, \nu) \leq 2r$.
\item  For all self-adjoint $a \in \cA$ we have $L^0(a) \leq rL(a)$.
\end{enumerate}
\end{proposition}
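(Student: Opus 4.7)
The plan is to dualize the seminorm $L^0$ on self-adjoint elements and compare it directly with the definition of $d^L$.

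First, I would reduce from complex to real scalars in the definition of $L^0$ on self-adjoint elements. For self-adjoint $a$ and $z = x + iy \in \bC$, the element $a - z 1_\cA$ is normal with commuting self-adjoint parts $a - x 1_\cA$ and $-y 1_\cA$, so $\|a - z 1_\cA\|^2 = \|a - x 1_\cA\|^2 + y^2$. Minimizing in $y$ forces $y = 0$, hence $L^0(a) = \inf_{x \in \bR}\|a - x 1_\cA\|$ for self-adjoint $a$, which is precisely the quotient norm on the real Banach space $\cA_{sa}/\bR 1_\cA$.

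Next, by Hahn--Banach duality this quotient norm equals the supremum of $|\phi(a)|$ over $\phi$ in the unit ball of the dual of $\cA_{sa}/\bR 1_\cA$, which is the unit ball of the annihilator of $1_\cA$ inside $(\cA_{sa})^*$. Now I invoke the recalled fact that the ball $D_2$ of radius $2$ in that annihilator equals $\{\mu - \nu : \mu, \nu \in S(\cA)\}$; scaling by $1/2$ gives
\[
L^0(a) \;=\; \tfrac{1}{2}\sup\{\,|(\mu - \nu)(a)| : \mu, \nu \in S(\cA)\,\}
\]
for every self-adjoint $a \in \cA$. This identity is the engine of the proof.

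With this in hand, the equivalence is a rescaling argument. For (2) $\Rightarrow$ (1), fix $\mu, \nu \in S(\cA)$, and note by Remark \ref{remar} that the supremum defining $d^L(\mu,\nu)$ may be restricted to self-adjoint $a$ with $L(a) \leq 1$; for any such $a$, hypothesis (2) gives $L^0(a) \leq r$, and the duality identity then yields $|(\mu - \nu)(a)| \leq 2 L^0(a) \leq 2r$. Conversely, for (1) $\Rightarrow$ (2), take self-adjoint $a$; the cases $L(a) = 0$ (so $a \in \bC 1_\cA$ by property (1) of Definition \ref{deflip}, and both sides vanish) and $L(a) = +\infty$ are immediate, and otherwise replacing $a$ by $a/L(a)$ and applying (1) gives $|(\mu - \nu)(a)| \leq 2 r L(a)$ for all $\mu, \nu \in S(\cA)$, whence the duality identity yields $L^0(a) \leq r L(a)$.

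The main technical point is the duality identity in the second paragraph; once the identification of the quotient norm with a supremum over differences of states is established carefully in the self-adjoint real setting (matching the hypotheses of the recalled Jordan-decomposition fact), no further obstacle remains and the rest is a routine homogeneity argument.
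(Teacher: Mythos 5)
Your proof is correct and follows essentially the same route as the paper's: both directions rest on the identity $L^0(a) = \tfrac12\sup\{|(\mu-\nu)(a)| : \mu,\nu \in S(\cA)\}$ for self-adjoint $a$, obtained from Hahn--Banach duality for the quotient norm together with the Jordan-decomposition description of $D_2$, plus Remark \ref{remar}; you merely make explicit the complex-to-real reduction and the duality identity that the paper leaves implicit. One small caveat: ``commuting self-adjoint parts'' alone does not yield $\|b+ic\|^2=\|b\|^2+\|c\|^2$ in general --- the identity $\|a - z1_\cA\|^2 = \|a - x1_\cA\|^2 + y^2$ holds here because the imaginary part is the scalar $-y1_\cA$, so that $\max_{\lambda\in\sigma(a)}\bigl((\lambda-x)^2+y^2\bigr)$ splits as a sum.
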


\begin{proof}
Suppose that statement (1) holds. Let $a \in \cA$ with $a^* = a$, 
and let $\phi \in D_2$
so that $\phi = \mu - \nu$ for some $\mu, \nu \in S(\cA)$. Then
\[
|\phi(a)| = |(\mu - \nu)(a)| \leq d^L(\mu, \nu) L(a) \leq 2rL(a)  .
\]
Since 
$\phi(1_\cA) = 0$, this inequality holds whenever $a$ is replaced
by $a - s1_\cA$ for some $s \in \bR$. Since
this is true for all $\phi \in D_2$ and since $a^* = a$, it follows that 
statement (2) holds.

\noindent
Conversely, suppose that statement (2) holds. 
Then for any $\mu, \nu \in S(\cA)$ and any self-adjoint
$a\in \cA$ with $L(a) \leq 1$ we have
\[
|\mu(a) - \nu(a)| = |(\mu - \nu)(a - s1_\cA)| \leq 2\|a - s1_\cA\|
\]
for any $s \in \bR$. Since $a^* = a$, it follows that $|\mu(a) - \nu(a)| \leq 2L^0(a)$.
Statement (1) follows from this.
\end{proof}

\begin{definition}
\label{raddef}
The smallest $r$ for which statements (1) and (2) 
just above hold is
called the \emph{radius} of $(\cA, L)$, and $2r$ is
called its \emph{diameter}.
\end{definition}

\begin{proposition}
\label{ordrad}
If $(X, d)$ is a compact metric space whose radius
is $r$, then the radius, as
defined above, of the compact C*-metric
space $(\cA = C(X), L^d)$, is $r$.
\end{proposition}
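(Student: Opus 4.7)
The plan is to apply Proposition \ref{diamet} directly, taking the ``radius'' of $(X,d)$ in the sense consistent with Definition \ref{raddef}, namely half the diameter. Writing $D = 2r$ for this diameter and using compactness to pick $x_0, y_0 \in X$ with $d(x_0, y_0) = D$, I need to show that $r$ is the smallest constant for which $L^0(f) \leq r L^d(f)$ holds for every real-valued (equivalently, self-adjoint) $f \in C(X)$.

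For the upper bound, if $f$ is real-valued with $L^d(f) < \infty$, then $|f(x) - f(y)| \leq L^d(f) \cdot d(x,y) \leq L^d(f) \cdot D$ for all $x, y \in X$, so the oscillation satisfies $\max f - \min f \leq 2rL^d(f)$. For real-valued $f$ the infimum defining $L^0(f)$ is attained at the midpoint $c = (\max f + \min f)/2$, yielding $L^0(f) = (\max f - \min f)/2 \leq r L^d(f)$. (The case $L^d(f) = +\infty$ is trivial.) By condition (2) of Proposition \ref{diamet} this bounds the radius of $(C(X), L^d)$ by $r$ from above.

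For the matching lower bound, consider $f(z) = d(z, y_0)$. The triangle inequality gives $L^d(f) \leq 1$, and $|f(x_0) - f(y_0)| = d(x_0, y_0) = D$ forces equality $L^d(f) = 1$. Since $f$ takes values in $[0, D] = [0, 2r]$, the formula above gives $L^0(f) = r$. Hence no constant $r' < r$ can satisfy $L^0(f) \leq r' L^d(f)$ for this particular $f$, and Proposition \ref{diamet} then shows the radius of $(C(X), L^d)$ is exactly $r$. The only real subtlety is fixing the ``radius'' convention for metric spaces to match that of Definition \ref{raddef}; once this is done the argument is essentially a direct computation via the extremal $1$-Lipschitz function $d(\cdot, y_0)$.
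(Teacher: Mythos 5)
Your proof is correct and follows essentially the same route as the paper's: the upper bound comes from the oscillation estimate $L^0(f) = (\max f - \min f)/2 \leq L^d(f)\,d(x_M,x_m)/2 \leq rL^d(f)$ for real-valued $f$, and the lower bound from the $1$-Lipschitz function $f(z) = d(z, y_0)$ with $x_0, y_0$ at diametral distance $2r$. Your explicit remark that ``radius'' of $(X,d)$ must here be read as half the diameter (rather than, say, the Chebyshev radius) is a point the paper leaves implicit, and is worth making.
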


\begin{proof}
Given any $\bR$-valued $f \in \cA$, let $x_M$ and $x_m$ be
points where $f$ takes its maximum and minimum values. Than
\[
L^0(f) = (f(x_M) - f(x_m))/2  \leq L^d(f) d(x_M, x_m)/2  \leq rL^d(f)  .
\]
Thus $L^0(f) \leq rL^d(f)$, and the radius of $(\cA, L^d)$ is
no greater than $r$. Since $X$ is compact, we can find
$x,y \in X$ such that $d(x,y) = 2r$. Define $f$ by
$f(z) = d(y, z)$ for any $z \in X$. Then $L^d(f) \leq 1$
and $L^0(f) \geq |f(x) - f(y)|/2 = r$, so that the radius of 
$(\cA, L^d)$ is no less than $r$.
\end{proof}
 

\section{States, and distances between them}
\label{trancos}

We will want to explore what the metric on
probability measures is for the
case of sum-metrics on a product of a finite number
of finite metric spaces, in preparation 
for doing the same in the quantum situation. 
But in this section we first collect information about the
metric on states for general compact metric spaces,
in a way that will apply to the quantum situation. We
treat states for sum-metrics in the next section.

So let $(X, d)$ be a compact metric space.
Notice that
the nullspace of $L^d$ is $\bC 1$. Let $Lip^0_d(X)$ be the
quotient space $Lip_d(X)/\bC 1$. 
(Alternatively, $Lip^0_d(X)$ is often defined as the
subspace of functions $f \in Lip_d(X)$ such that $f(x_*) = 0$
for some fixed base-point, but
this does not have a very natural non-commutative version.)
Then $L^d$ determines
a norm on $Lip^0_d(X)$, which we will denote again by $L^d$. 
It is well-known that $Lip^0_d(X)$
is complete for this norm (see \cite{Wvr4}, though Weaver's
terminology is somewhat different from ours). For an $f \in Lip_d(X)$
we will denote its image in $Lip^0_d(X)$ again by $f$.

We seek to express everything in terms of the algebra $\cA=C(X)$,
and so we use the definition of
the Kantorovich metric, $d^{L^d}$, on the set $Prob(X)$ of probability
measures on $X$, in its well-known dual form. This is
often called the Kantorovich-Rubinstein metric \cite{Edw1, Edw2}.
Probability measures are then 
viewed as linear functionals (Radon measures) on $C(X)$,
and we define the metric, $d^{L^d}$, on $Prob(X)$ by
\[
d^{L^d}(\mu, \nu) = \sup \{|\mu(f) - \nu(f)|: 
f \in Lip^0_d(X) \ \ \mathrm{and} \ \   L^d(f) \leq 1\}
\]
for any $\mu, \nu \in Prob(X)$. This is exactly equation
\ref{distance} as applied to $L^d$.

By means of the Arzela-Ascoli theorem one finds 
that the unit $L^d$-ball of $Lip^0_d(X)$ is compact for 
the norm $L^0$ on $Lip^0_d(X)$. 
As one consequence of Theorem \ref{predual} below, we will obtain
a proof of the well-known fact that this implies that the metric $d^{L^d}$
determines the weak-$*$ topology on $Prob(X)$, and so $Prob(X)$ is
compact
for this metric topology. 

\begin{notation}
\label{pd}
We let $M^0(X)$ denote the space of all $\bC$-valued
Radon measures on $X$ that send $\bC 1_\cA$ to 0.
Thus it is the dual Banach space of $\cA^0$, and is equipped
with the norm dual to the norm $L^0$ on $\cA^0$.
We denote this dual norm by $\|\cdot\|_1$. It is 
just the usual total-variation norm, restricted to $M^0(X)$.
\end{notation}

 Because of the Jordan decomposition
for measures,  we can view
$M^0(X)$ as the vector space of (possibly $\bC$-valued)
linear functionals that is spanned by the
$\mu - \nu$'s for which $\mu$ and $\nu$ are probability measures.

Let $r$ be the radius of $(X, d)$.
From Propositions \ref{ordrad} and \ref{diamet}
we see that for every $\bR$-valued $f \in Lip^0_d(X)$ we have
$L^0(f) \leq rL^d(f)$. Consequently, every linear functional 
on $Lip^0_d(X) \subseteq C(X)$
that is continuous for the norm $L^0$ is also continuous
for the norm $L^d$. Thus $M^0(X)$ consists of linear
functionals that are continuous for the norm $L^d$, and
so we can also equip $M^0(X)$ with the
dual norm, $\| \cdot \|_d $, to $L^d$ (restricted to $M^0(X)$, so that,
\begin{equation}
\label{dnorm}
\|\phi\|_d = \sup \{ |\phi(f)|: \  L^d(f) \leq 1\}  
\end{equation}
for any $\phi \in M^0(X)$. 
We then see that $d^{L^d}$ is just the restriction of the norm
$\| \cdot \|_d $ to the elements of $M^0(X)$ of the form
$\mu - \nu$ for $\mu, \nu \in S(\cA)$.

As explained well in \cite{Wvr4, Wvr3, Var2}, in general
$(M^0(X), \| \cdot \|_d $) is not complete,
and its completion is not the full dual space
of the Banach space $ (Lip^0_d(X), L^d)$,
and in fact $M^0(X)$ is best viewed as
a dense subspace of
a \emph{pre-dual} for $Lip^0_d(X)$. But we will not need
to use this fact.

We now place these results in a more general context, which is
in particular, pertinent to
the non-commutative case. Our vector spaces can be over either
$\bR$ or $\bC$, although later we will see that for certain aspects there is a 
significant difference between these two cases.
For any normed vector space $Z $ we denote its closed unit ball by $B_Z$, 
and its dual by $Z'$. The next theorem is basically well-known,
but I have not seen it assembled in the way we give here. 
The part of the theorem concerning preduals is related to theorem 18 of \cite{Dxm4}.
When our theorem is applied to the discussion above, 
$V$ would be $\cA^0 =C(X)/\bC 1$, with $L^0$ as norm, and $W$
would be $Lip^0_d(X)$ with norm  $L^d$. 
Then $V'$ would be $M^0(X)$.

\begin{theorem}
\label{predual}
Let $\{V, \|\cdot\|_V\}$ be a normed vector space, and let $W$ be a dense 
subspace of $V$. Let $\| \cdot \|_W$ be a norm on $W $ for which there is a 
constant, $r \in \bR$, such that $\|w\|_V \leq r \|w\|_W$ for all $w \in W$.
Then each $\phi \in V'$ can be viewed as a continuous linear functional on $W$,
and $\|\phi\|_{W'} \leq r \|\phi\|_{V'}$, so that $B_{V'} \subseteq r B_{W'}$. 

Suppose that $B_W$ is totally bounded for $\|\cdot\|_V$. Then
the topology on $B_{V'}$ from the norm $\|\cdot\|_{W'}$ coincides
with the restriction to it of the weak-$*$ topology of $V'$ as the
dual of $V$ (for which $B_{V'}$ is compact). 

Furthermore, let $F$ be the subspace of $W'$ consisting
of those elements whose restriction to $B_W$ is continuous for
its topology from $\|\cdot \|_V$. Then $F$ is a predual for 
the completion of $W$, 
and $V'$ is  dense in $F$ for the norm $\|\cdot\|_{W'}$.
 \end{theorem}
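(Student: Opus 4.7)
The first assertion is immediate from the norm comparison: for $\phi \in V'$ and $w \in W$, $|\phi(w)| \le \|\phi\|_{V'}\|w\|_V \le r\|\phi\|_{V'}\|w\|_W$, so restriction sends $V'$ into $W'$ with operator-norm bound $r$, and is injective because $W$ is dense in $V$. The same estimate shows $V'|_W \subseteq F$, since each $\phi \in V'$ is $\|\cdot\|_V$-continuous on all of $V \supseteq B_W$.

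For the topology equivalence on $B_{V'}$ I would show the two inclusions. If $\phi_\alpha \to \phi$ in $\|\cdot\|_{W'}$ inside $B_{V'}$, one gets pointwise convergence on $W$, and the uniform $\|\cdot\|_{V'}$-bound together with density of $W$ in $V$ upgrades this to pointwise convergence on $V$, i.e.\ weak-$*$ convergence. Conversely, if $\phi_\alpha \to \phi$ weak-$*$, the differences $\phi_\alpha - \phi \in 2B_{V'}$ form an equicontinuous family of linear functionals on $V$, and pointwise convergence of such a family on the $\|\cdot\|_V$-totally bounded set $B_W$ must be uniform by a standard Arzel\`a--Ascoli argument; uniform convergence on $B_W$ is exactly $\|\phi_\alpha - \phi\|_{W'} \to 0$. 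Since both topologies are Hausdorff and weak-$*$ makes $B_{V'}$ compact (Banach--Alaoglu), they coincide there, and $B_{V'}$ is $\|\cdot\|_{W'}$-compact.

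For the predual assertion I would define the canonical evaluation $j \colon \tilde W \to F'$ by $j(w)(\phi) = \hat\phi(w)$, where $\hat\phi$ is the unique continuous extension of $\phi \in W'$ to the completion $\tilde W$; trivially $\|j(w)\|_{F'} \le \|w\|_W$. To show $j$ is an isometry I would pass to the completion $\tilde V$, where total boundedness makes $K := \overline{B_W}^{\tilde V}$ compact and absolutely convex. For $\phi \in V'$, continuity gives $\sup_K|\phi| = \|\phi|_W\|_{W'}$, so the polar $K^\circ \subseteq \tilde V'$ corresponds under the restriction identification to $V'|_W \cap B_F$. The bipolar theorem in $\tilde V$ then yields $\|w\|_W = \sup_{\phi \in K^\circ}|\phi(w)|$ for $w \in W$, so $V'|_W$, and a fortiori $F$, is $1$-norming for $W$, and by continuity for $\tilde W$, so $j$ is isometric.

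It remains to prove the density of $V'|_W$ in $F$ and the surjectivity of $j$. One sees easily that $F$ is $\|\cdot\|_{W'}$-closed in $W'$, since a uniform limit on $B_W$ of $\|\cdot\|_V$-continuous functions is $\|\cdot\|_V$-continuous. The main obstacle is the density claim: given $\phi \in F$, its extension $\tilde\phi$ is a continuous linear function on the compact absolutely convex set $K$, and I would approximate $\tilde\phi$ uniformly on $K$ by restrictions of $V'$-elements via a Hahn--Banach / approximation argument on $K$, using that continuous linear functions on such a set lie in the sup-norm closure of the restrictions of global linear functionals on $\tilde V$. Once density is secured, surjectivity of $j$ follows: for $\Phi \in F'$, the restriction $\Phi|_{V'|_W}$ is $\|\cdot\|_{W'}$-continuous on $V'$, hence by the topological assertion $\sigma(V',V)$-continuous on bounded sets and so on all of $V'$ by linearity, hence represented by evaluation at some $v \in V$; density of $V'|_W$ in $F$ combined with the isometry of $j$ then identifies $v$ with an element of $\tilde W$ satisfying $j(v) = \Phi$.
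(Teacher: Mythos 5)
Your first two paragraphs are correct and essentially reproduce the paper's argument: the norm comparison is identical, and your equicontinuity/Arzel\`a--Ascoli argument for the coincidence of the two topologies on $B_{V'}$ is the paper's $d/3$-net computation in different clothing.

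The predual paragraph is where you genuinely diverge from the paper, and where the gaps are. The paper proves \emph{surjectivity first}: it shows that the evaluation map $\s$ sends $B_W$ onto a weak-$*$ compact convex subset of $B_{F'}$ and then uses Hahn--Banach separation (plus the fact that weak-$*$ functionals on $F'$ come from $F$) to conclude $\s(B_W)=B_{F'}$, which delivers the isometry and surjectivity in one stroke; the density of $V'$ in $F$ then falls out of a one-line annihilator argument. You run the logic in the opposite direction, which puts all the weight on steps you only gesture at. (i) Your bipolar computation yields $\sup_{\phi\in K^\circ}|\phi(w)|=p_K(w)$, the Minkowski gauge of $K=\overline{B_W}^{\tilde V}$; the further identification $p_K(w)=\|w\|_W$ requires $K\cap W\subseteq B_W$, i.e.\ that $B_W$ be relatively $\|\cdot\|_V$-closed in $W$, which is not among the stated hypotheses and needs an argument (to be fair, the paper's own proof quietly assumes $B_W$ is $\|\cdot\|_V$-compact after completing, which is a cousin of the same issue). (ii) The density of $V'|_W$ in $F$ is, in your architecture, the entire content of the theorem, and you dispose of it by citing the true but nontrivial fact that continuous affine functions on a compact convex set lie in the uniform closure of restrictions of global functionals plus constants; this is a real theorem requiring its own separation argument in $\tilde V\times\bR$, and you would also need to verify that an $f\in F$, assumed continuous only on $B_W$, extends to a continuous affine function on all of $K$ (it does, by uniform continuity, but this must be said). (iii) Your closing step produces a vector $v\in\tilde V$ representing $\Phi$ and then asserts that it ``identifies with'' an element of the abstract completion $\tilde W$; this is precisely where the distinction between $\tilde W$ and its possibly non-injective, possibly non-closed image in $\tilde V$ bites, and it is not addressed. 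I recommend adopting the paper's order: compactness of $\s(B_W)$ plus separation inside $F'$ sidesteps all three difficulties, and makes the density of $V'$ in $F$ a trivial corollary rather than the load-bearing step.
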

 
\begin{proof}
We show first that for the situation in the first paragraph, the
 topology on $B_{V'}$ from the norm $\|\cdot\|_{W'}$ is stronger than
 the restriction to it of the weak-$*$ topology of $V'$ as the
dual of $V$. For a given $\phi \in  B_{V'}$ let 
\[
\cO(\phi, \{v_j\}, \epsilon) =\{\psi \in  B_{V'}: |(\phi - \psi)(v_j)| < \epsilon, \ \ 
 j = 1, \dots , n\}  ,
\]
 an arbitrary element of a neighborhood base for $\phi$ for the weak-$*$ topology of $V'$
 restricted to $B_{V'}$.
 We seek a $d \in \bR$ such that the open 
 ball $B_{V'}(\phi, d) = \{\psi \in  B_{V'}: \|\phi - \psi\|_{W'} < d\}$ about $\phi$ is contained in
$\cO(\phi, \{v_j\}, \epsilon) $. Let $M = \max\{\|v_j\|: j = 1,\dots , n\}$, and let
$d = \epsilon/rM$. Then we see that for any $\psi \in B_{V'}(\phi, d)$ and each $j$ we 
have $|(\phi - \psi)(v_j)| < \epsilon\}$, as needed.   

Now suppose that $B_W$ is totally bounded for $\|\cdot\|_V$. We show then that 
 the topology on $B_{V'}$ from the norm $\|\cdot\|_{W'}$ is weaker than
 the restriction to it of the weak-$*$ topology of $V'$ as the
dual of $V$ (so these topologies coincide in view of the previous paragraph).
For a given $\phi \in  B_{V'}$ and a $d > 0$, let 
$B_{W'}(\phi, d) = \{\psi \in  B_{V'}: \|\phi - \psi\|_{W'} < d\}$,  
 an arbitrary element of a neighborhood base for $\phi$ for the norm of $W'$.
 We want to show that this ball contains an open neighborhood of $\phi$
 for the weak-$*$ topology of $V'$. Since $B_W$ is totally bounded for $\|\cdot\|_V$,
 we can find $w_1, \dots, w_n$ in $B_W$ such that the $\|\cdot \|_V$-balls
 about them of
 radius $d/3$ cover $B_W$. Set
 \[
\cO(\phi, \{w_j\}, d/3) =\{\psi \in  B_{V'}: |(\phi - \psi)(w_j)| < d/3, \ \ j = 1, \dots , n\}  ,
\]
a weak-$*$ open neighborhood of $\phi$. Consider $w \in B_W$. There is
a $k$ such that $\|w - w_k\|_V < d/3$. Then for any $\psi \in \cO(\phi, \{w_j\}, d/3)$
we have
\begin{align*}
|(\phi & - \psi)(w)| \\
&\leq |\phi(w) - \phi(w_k)| + |\phi(w_k) - \psi(w_k)| + |\psi(w_k) - \psi(w)|  \\
&\leq \|\phi\|_{V'}\|w - w_k\|_V +  |(\phi - \psi)(w_k)| + \|\psi\|_{V'}\|w_k - w\|_V    < d   .
\end{align*}
Thus $\|\phi - \psi\|_{W'} < d$, so that $\psi \in B_{W'}(\phi, d)$ as needed.

We now show that the completion of $W$ is a dual space, assuming as above
that $B_W$ is totally bounded for $\|\cdot\|_V$. 
When we take the completions of $V$ and $W$, the conditions of the theorem
continue to hold. So we now assume that $V$ and $W$ are complete.
Of course a predual, $F$, for $W$ must
be a closed subspace of $W'$. Let $F$ consist of all the elements of $W'$
whose restriction to $B_W$ is continuous for the (compact) topology on $B_W$ 
from $\|\cdot\|_V$. Then $F$ is a closed subspace of $W'$.
Clearly $V' \subseteq F$, so $F$ separates the points of $W$. Define a linear
map, $\s$,  from $W$ into $F'$ by $\s(w)(f) = f(w)$. Then $\s$ is injective, and
$|\s(w)(f)| \leq \|f\|_{W'}\|w\|_W$, so that $\|\s\| \leq 1$. 
In particular, $\s(B_W) \subseteq B_{F'}$. Since the composition of $\s$
with the linear functional on $F'$ determined by any element of $F$ is, from the 
definition of $F$, obviously
 continuous on $B_W$ for its compact topology from $\|\cdot\|_V$, it follows that $\s$ 
 on $B_W$ is continuous
 for the weak-$*$ topology on $F'$. It follows that $\s(B_W)$ is compact for the
 weak-$*$ topology. 
 
 Since  $\s(B_W)$ is convex, the Hahn-Banach separation
 theorem tells us that
if $\s(B_W)$ were not all of $B_{F'}$ there would be a $\theta \in B_{F'}$ and
a weak-$*$ linear functional separating $\theta$ from $\s(B_W)$.  
But every weak-$*$ linear functional on $F'$ comes from an element of $F$.
So there would be an $f \in F$ such that $f(w) \leq 1 < \theta(f)$ for every
$w \in B_W$. But the first inequality says that $\|f\|_{W'} \leq 1$, and so
the second inequality says that $\|\theta\|_{F'} > 1$, contradicting
$\theta \in B_{F'}$. Thus $\s$ is an isometry from $W$ onto $F'$.

Finally, suppose that there is a $\theta \in F'$ such that 
$\theta(\phi) = 0$ for all $\phi \in V' \subseteq F$.
We have just seen that every element of $F'$ comes from an element of $W$,
so there is a $w \in W$ such that $0= \theta(\phi) =  \phi(w)$ for all $\phi$.
It follows that $w = 0$, so that $\theta = 0$. Then the Hahn-Banach theorem
tells us that the closure of $V'$ in $F'$ for the norm $\|\cdot\|_{W'}$ is all of $F'$.
\end{proof} 

I do not know what can be said about the uniqueness of the predual in general.
For compact metric spaces the predual is unique -- see 
\cite{Wvr3} or section 3.4 of \cite{Wvr4}.

In \cite{Wvr4}, and elsewhere, the completion of $M^0(X)$ 
for the norm $\| \cdot \|_d $ dual 
to $L^d$ is denoted by something like $AE_d(X)$ and called the
Arens-Eells space for $(X, d)$, after the names of the authors 
of the first paper to study these spaces \cite{ArE}.
In some other parts of the literature
this Banach space is denoted instead by something 
like $\cF_d(X)$ and called the
``Lipschitz-free'' space for $(X, d)$. There is a quite substantial literature
exploring the Banach-space properties of these spaces (see
\cite{DKO, OstO} and the references they contain), but many open questions remain.  

Now Theorem \ref{predual} applies equally well in the non-commutative
setting. Let $\cA$ be a unital C*-algebra and let $L$ be a seminorm on
$\cA$ that satisfies conditions (1) and (2) of Definition \ref{deflip}.

\begin{notation}
\label{ncpd}
We let $M^0(\cA)$ denote the space of all (continuous)
linear functionals on $\cA$ that send $\bC 1_\cA$ to 0.
Thus it is the dual Banach space of $\cA^0$, and is equipped
with the norm dual to the norm $L^0$ on $\cA^0$.
We denote this dual norm by $\|\cdot\|_1$. 
\end{notation}

 Then by the Jordan decomposition,
$M^0(\cA)$ is the vector space of (possibly $\bC$-valued)
linear functionals that is spanned by the
$\mu - \nu$'s for which $\mu,\nu \in S(\cA)$.
As discussed after Notation \ref{Lzero}, 
the set $\{\mu - \nu: \mu, \nu \in S(\cA)\}$
coincides with the the ball $D_2$ of radius 2 in the
$\bR$-subspace of self-adjoint elements of $M^0(\cA)$.

Let $\cL_L = \{a \in \cA:L(a) < \infty\}$, and let
$\cL_L ^0 = \cL_L/\bC 1_\cA$. Then $L^0$ and $L$
can both be viewed as norms on $\cL_L ^0$, and $\cL_L ^0$
is an $L^0$-dense subspace of $\cA^0$.
Suppose there is an $r \in \bR^+$ such that we have
$L^0(a) \leq rL(a)$ for all $a \in \cA$. Then the restriction
to $\cL_L$ of every $\phi \in M^0(\cA)$ is continuous
for the norm $L$. Thus we can also equip $M^0(\cA)$ with the
dual norm, $\| \cdot \|_L $, to $L$ (restricted to $M^0(\cA)$, so that,
\begin{equation}
\label{ddnorm}
\|\phi\|_ L= \sup \{ |\phi(a)|: \  a \in \cA, \ \ L(a) \leq 1\}  
\end{equation}
for any $\phi \in M^0(\cA)$. 
We then see that $d^{L}$ is just the restriction of the norm
$\| \cdot \|_L $ to the elements of $M^0(A)$ of the form
$\mu - \nu$ for $\mu, \nu \in S(\cA)$. We are now in
position to apply Theorem \ref{predual}. We obtain:

\begin{corollary}
Let $\cA$ be a unital C*-algebra and let $L$ be a seminorm on
$\cA$ that satisfies conditions (1) and (2) of Definition \ref{deflip}.
Assume that  there is an $r \in \bR^+$ such that
$L^0(a) \leq rL(a)$ for all $a \in \cA$. Let notation be as
above. Then the radius of $S(\cA)$ for $d^L$ is no bigger than $r$. 

Let $B_L$ be the unit $L$-ball of $\cL_L^0$. If $B_L$ is totally 
bounded for $L^0$, then
the topology on the $\|\cdot\|_1$-unit ball of $M^0(A)$
 from the norm $\|\cdot\|_L$ coincides
with the restriction to it of the weak-$*$ topology of $M^0(A)$ as the
dual of $\cA^0$ (for which its unit ball is compact). Consequently,
the metric topology on $S(\cA)$ from $d^L$ coincides with
the weak-$*$ topology, and so is compact.

Furthermore, let $F$ be the subspace of $L$-continuous linear 
functionals on $\cL^0$ consisting
of those elements whose restriction to $B_L$ is continuous for
its topology from $\|\cdot \|_1$. Then $F$ is a predual for 
the completion of $\cL^0$, 
and $M^0(A)$ is  dense in $F$ for the norm $\|\cdot\|_L$.

\end{corollary}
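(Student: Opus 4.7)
The plan is to apply Theorem \ref{predual} with $V = \cA^0$ equipped with the norm $L^0$ and $W = \cL_L^0$ equipped with the norm $L$. Condition (1) of Definition \ref{deflip} (that $L$ vanishes only on scalars) makes $L$ a bona fide norm on $\cL_L^0$, condition (2) guarantees $W$ is $L^0$-dense in $V$, and the standing hypothesis $L^0(a) \leq rL(a)$ is exactly the comparison $\|w\|_V \leq r\|w\|_W$ required by the theorem. Under these identifications, $V' = (\cA^0)'$ is precisely $M^0(\cA)$ with dual norm $\|\cdot\|_1$, so $B_{V'}$ is the $\|\cdot\|_1$-unit ball of $M^0(\cA)$, and $W'$ carries the dual norm $\|\cdot\|_L$.

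For the radius assertion, observe that the hypothesis $L^0 \leq rL$ holds for \emph{all} $a$, so in particular for self-adjoint $a$; this is condition (2) of Proposition \ref{diamet}, which then yields $d^L(\mu,\nu) \leq 2r$ for all states, i.e., radius $\leq r$. For the topology statement, assume $B_L = B_W$ is totally bounded for $L^0 = \|\cdot\|_V$; the second assertion of Theorem \ref{predual} then directly delivers the coincidence of $\|\cdot\|_L$-topology and weak-$*$ topology on $B_{V'}$, with weak-$*$ compactness of the latter being Banach--Alaoglu. To transfer this to $S(\cA)$, I would fix a reference state $\nu_0$ and consider the affine injection $\iota : \mu \mapsto \mu - \nu_0$ of $S(\cA)$ into $M^0(\cA)$. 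By the Jordan decomposition remark following Notation \ref{ncpd}, $\iota(S(\cA))$ lies in the ball of $\|\cdot\|_1$-radius $2$, and $\iota$ is simultaneously a weak-$*$ homeomorphism onto its image and an isometry from $d^L$ to the restriction of $\|\cdot\|_L$. Since the two topologies coincide on this bounded subset of $M^0(\cA)$, they coincide on $S(\cA)$; the weak-$*$ compactness of $S(\cA)$ is the classical one.

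The predual statement is a direct reading of the third part of Theorem \ref{predual} under the same identifications: the space $F$ described in the corollary is exactly the $F$ constructed in the theorem, so $F$ is a predual for the completion of $W = \cL_L^0$, and $V' = M^0(\cA)$ is $\|\cdot\|_L$-dense in $F$. The whole corollary is thus essentially a translation of Theorem \ref{predual} into C*-algebraic language; the only non-mechanical point is the translation-by-$\nu_0$ trick that reduces $S(\cA)$ to a bounded subset of the linear space $M^0(\cA)$ to which Theorem \ref{predual} directly applies, and this is the step I would pause over to state cleanly.
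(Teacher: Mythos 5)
Your proposal is correct and follows exactly the route the paper intends: the corollary is stated immediately after the remark ``We are now in position to apply Theorem \ref{predual}'', with the same identifications $V = \cA^0$ (norm $L^0$), $W = \cL_L^0$ (norm $L$), $V' = M^0(\cA)$ that you make. Your added details --- the radius bound via Proposition \ref{diamet} and the translation $\mu \mapsto \mu - \nu_0$ combined with a scaling to pass from the unit ball of $M^0(\cA)$ to $S(\cA)$ --- correctly fill in the steps the paper leaves implicit.
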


We need next to consider what happens for sum-metrics.


\section{States and sum-metrics}
\label{sumcos}

We continue our preparations for studying the quantum situation
by considering here properties of $M^0(X)$ with its dual
norm $\|\cdot\|_{L^d}$ for 
the case of sum-metrics. When we concentrate on that norm, 
we will write $M^0_d(X)$.
But for sum-metrics the situation is considerably more complicated
when the distinction between dual and predual must be made. So
in this section we will eventually consider only finite sets and finite-dimensional
vector spaces, which is actually all we need for Hamming metrics.

 We first
consider a more abstract setting, which will also be useful in
the quantum situation, and which is well-known 
at least in
closely related situations involving norms rather than seminorms.
Let $W$ be a vector space (e.g. $Lip^0_d(X)$) , 
and let $\{\rho_i: i \in I\}$ be a finite collection of seminorms on 
$ W$ that collectively separate the points of $ W $, that is,
if $\rho_i(w) = 0 $ for all $i \in I $ then $w=0 $. Thus we can
define a norm, $\|\cdot\|_W $, on $W $ by
\begin{equation}
\label{sumnorm}
\|w\|_W = \max \{\rho_i(w): i \in I\}.
\end{equation}
For each $i \in I $ let $N_i$
be the null space of $\rho_i $, so that $\rho_i $ can be viewed 
as a norm on $W/N_i $. Notice that $N_i$ is a closed subspace of $W$ 
for $\| \cdot\|_W$. Let $\tilde \rho $ be the norm on 
$\oplus (W/N_i) $ defined by
\begin{equation}
\label{addanorm}
\tilde \rho(\{w_i\}) = \max_i \{\rho_i(w_i): i \in I\},
\end{equation}
where our notation does not distinguish between whether $w_i$
is viewed as being in $W$ or in $W/N_i$. Let $q_i $ be the 
quotient map from 
$W $ onto $W/N_i $,
and let $\pi$ be the injection of $W$ into $\oplus_i W/N_i $ given by
\begin{equation}
\label{inaject}
\pi(w) = \{q_i(w)\}. 
\end{equation}
From the definition above for the norm on
$W$ it is clear that $\pi$ is isometric.

Let $W'$ be the dual of $W$, with dual norm $\|\cdot\|_{W'}$,
and for each $i \in I$ let $N_i^\perp$ be the
annihilator of $N_i$ in $W'$. Then $N_i^\perp$ can be identified with
the dual of $W/N_i$. For each $i$ let $\rho_i^*$ be the norm on 
$N_i^\perp$  dual to the norm 
$\rho_i$ on $W/N_i $. We seek a formula for $\|\cdot\|_{W'}$
in terms of the $\rho_i^* $'s.
Notice then that $\oplus N_i^\perp$ can be identified with
the dual of $\oplus (W/N_i) $.
Let $\tilde \rho^*$ denote the norm on $\oplus N_i^\perp$ dual to the 
norm $\tilde \rho$ on $\oplus (W/N_i) $
defined in equation \eqref{addanorm}.
Then it is easily seen that
$\tilde \rho^*$ can be expressed in terms of the $\rho_i^*$'s by
\[
\tilde \rho^*(\{\phi_i\}) = \sum_i \rho_i^*(\phi_i)
\]
where $\phi_i \in N_i^\perp$ for each $i$ (just as the dual
of $\ell^\infty(X)$ for a finite set $X$ is $\ell^1(X)$).
The dual of the isometry $\pi $ of $W$ into $\oplus (W/N_i) $ is easily seen
to be the quotient map $\pi^*$ from $ \oplus N_i^\perp$ onto $W'$
defined by
\[
\pi^*(\{\phi_i\}) = \sum \phi_i  .
\]
Now suppose $M$ and $N$ are normed vector 
spaces and that $\s$ is an isometry
from $M$ into $N$, with dual operator $\s^*$ from $N^*$ onto $M^*$. 
Then a simple argument shows that the norm
on $M^*$ is the quotient norm from the norm on $N^*$ via $\s^*$.
 When we apply this fact to the isometry  
defined in equation \eqref{inaject}, we obtain:

\begin{proposition}
\label{dualanorm}

Let $W$ be a vector space over $\bC$ (or $\bR$), and let
$\{\rho_i: i \in I\}$ be a finite collection of seminorms on 
$ W $ that collectively separate the points of $ W $. Let 
$\|\cdot\|_W$ be the norm on $W $ defined by
\begin{equation}
\label{asumnorm}
\|w\|_W= \max \{\rho_i(w): i \in I\}.
\end{equation}
For each $i \in I $ let $N_i$
be the null space of $\rho_i $, so that $\rho_i $ can be viewed 
as a norm on $W/N_i $.
 Let $W'$ be the dual space of $W$, with dual norm $\|\cdot\|_{W'}$.
Let $N_i^\perp$ be the annihilator of $N_i$ in $W'$, so that
$N_i^\perp$ can be identified with the dual space of $W/N_i $.
For each $i$ let $\rho_i^*$ be the norm on 
$N_i^\perp$ dual to the norm 
$\rho_i$ on $W/N_i $. Then
\[
\|\phi\|_{W'} = \inf \{ \sum \rho^*_i(\phi_i) : \phi = \sum \phi_i\
\ \ \mathrm{with} \ \  \phi_i \in N_i^ \perp, \ \ i \in I \} 
\]
for each $\phi \in W'$.
\end{proposition}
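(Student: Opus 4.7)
The plan is to execute the strategy already sketched in the paragraphs preceding the proposition, namely to realize $W$ isometrically inside the direct sum $\oplus_i (W/N_i)$ equipped with a max-norm and then dualize. The formula to be proved is exactly the quotient-norm expression induced on $W'$ by the adjoint of this isometric embedding.

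First I would verify that the map $\pi : W \to \oplus_i (W/N_i)$, $\pi(w) = \{q_i(w)\}$, is well-defined and injective (because the $\rho_i$ collectively separate points) and is isometric when the target carries the max-norm $\tilde\rho(\{w_i\}) = \max_i \rho_i(w_i)$ from \eqref{addanorm}: indeed $\tilde\rho(\pi(w)) = \max_i \rho_i(q_i(w)) = \max_i \rho_i(w) = \|w\|_W$ directly from \eqref{sumnorm}. Next I would spell out the identifications of duals. For a finite direct sum the dual space decomposes as $(\oplus_i (W/N_i))^* \cong \oplus_i (W/N_i)^*$, and each $(W/N_i)^*$ is canonically identified with $N_i^\perp \subseteq W'$ (a functional on $W/N_i$ pulls back via $q_i$ to a functional on $W$ that annihilates $N_i$, and conversely every element of $N_i^\perp$ factors through $q_i$). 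Under these identifications the norm dual to the max-norm $\tilde\rho$ is the sum of the component norms:
\[
\tilde\rho^*(\{\phi_i\}) = \sum_i \rho_i^*(\phi_i),
\]
by the standard duality between $\ell^\infty$- and $\ell^1$-style direct sums in the finite case.

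Then I would compute the adjoint. For $\{\phi_i\} \in \oplus_i N_i^\perp$ and $w \in W$, viewing each $\phi_i$ as a functional on $W$ that vanishes on $N_i$, we have
\[
\pi^*(\{\phi_i\})(w) = \sum_i \phi_i(q_i(w)) = \sum_i \phi_i(w),
\]
so $\pi^*(\{\phi_i\}) = \sum_i \phi_i$. The key structural fact I would invoke is that whenever $\sigma : M \to N$ is an isometric embedding of normed spaces, its adjoint $\sigma^* : N^* \to M^*$ is surjective and endows $M^*$ with the quotient norm
\[
\|\phi\|_{M^*} = \inf\{\|\psi\|_{N^*} : \sigma^*(\psi) = \phi\},
\]
a routine consequence of the Hahn--Banach extension theorem: any $\phi \in M^*$ extends to some $\tilde\phi \in N^*$ of the same norm, giving $\|\phi\|_{M^*} \geq$ the infimum, while the reverse inequality is automatic from $\|\sigma^*\| \leq 1$.

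Applying this lemma to the isometry $\pi$ yields
\[
\|\phi\|_{W'} = \inf\left\{\tilde\rho^*(\{\phi_i\}) : \{\phi_i\} \in \oplus_i N_i^\perp,\ \pi^*(\{\phi_i\}) = \phi\right\} = \inf\left\{\sum_i \rho_i^*(\phi_i) : \phi = \sum_i \phi_i,\ \phi_i \in N_i^\perp\right\},
\]
which is the asserted formula. The only real point requiring care is the correct identification of $(W/N_i)^*$ with $N_i^\perp$ and the matching of the norms $\rho_i^*$ under this identification; everything else is either a direct computation or an invocation of the Hahn--Banach-based quotient-norm lemma. I expect no serious obstacle because the collection $\{\rho_i\}$ is finite, so no completeness or topological subtleties enter the duality between the max-norm and the sum-of-norms.
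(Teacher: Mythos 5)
Your proposal is correct and follows essentially the same route as the paper: the isometric embedding $\pi$ of $W$ into $\oplus_i(W/N_i)$ with the max-norm, the identification of the dual with $\oplus_i N_i^\perp$ carrying the sum of the $\rho_i^*$'s, the computation $\pi^*(\{\phi_i\})=\sum_i\phi_i$, and the Hahn--Banach fact that the adjoint of an isometry induces the quotient norm on the dual are all exactly the steps the paper assembles in the paragraphs preceding the proposition. No gaps.
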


We now apply this proposition to the situation of Proposition
\ref{summet}, but only for finite metric spaces, where there
is no distinction between the dual and the predual
of $ Lip^0(X, d_S)$. Thus we view $M^0_d(X)$ as the
dual of $ Lip^0(X, d_S)$, with dual norm $\|\cdot\|_d$, much as 
defined in equation \eqref{dnorm}. We let  $X = \Pi_i X_i$, where
each $(X_i, d_i)$ is a finite
metric space, and we let $d_S$
be the sum-metric on $X$.  Let $L^{d_S}$ be the
Lipschitz seminorm for $d_S$, viewed as a
seminorm on $Lip(X, d_S)$, so that according to
Theorem \ref{summet} we have
\[
L^{d_S}(f) = \max\{ L_i(f): i \in I\}
\]
for all $f \in Lip(X, d_S)$. To apply Proposition \ref{dualanorm}
we let $W = Lip^0(X, d_S)$ and we view $L^{d_S}$ as a norm
on $W$. We also view each of the $L_i$'s as a seminorm on
$W$ (which works since $X$ is finite). 
From Proposition \ref{nullj} we see that the null-space, $N_i$,
of each $L_i$ is, as vector space, the image in $W$ of 
$C(\tilde X_i)$, where
$\tilde X_i = \Pi \{X_j: j \neq i\}$, 
and where  $C(\tilde X_i)$ is
viewed as a subalgebra of $C(X)$. Then $L_i$ can be viewed
as a norm on the quotient space $C(X)/C(\tilde X_i)$.
We then let $N_i^\perp$ be the annihilator of $N_i$
in  $W'$. As notation for the norm on $N_i^\perp$ dual to $L_i$
we use $\|\cdot\|_i$ instead of $\rho^*_i$.
We are then exactly in the situation of Proposition \ref{dualanorm},
and it tells us that:

\begin{corollary}
\label{dualw}
With $ Lip^0(X, d_S)$ and $M^0_d(X)$ as above for finite $X$, and
with notation as just above, 
for each $\phi \in M^0_d(X)$ we have
\[
\|\phi\|_d =  
\inf \{ \sum \| \phi_i \|_i : \phi = \sum \phi_i\
\ \ \mathrm{with} \ \  \phi_i \in N_i^ \perp, \ \ i \in I \}   ,
\]
for $N_i^\perp = C(\tilde X_i)^\perp \subseteq M^0_d(X)$. 
Thus if
$\mu$ and $\nu$ are probability measures on $X$, then
\[
d^{L^{d_S}}(\mu, \nu) = \inf \{ \sum \| \phi_i \|_i : \mu - \nu = \sum \phi_i\
\ \ \mathrm{with} \ \  \phi_i \in N_i^ \perp, \ \ i\in I \}    .
\]
By the argument in Remark \ref{remar} one can require all the $\phi_i$'s 
to be self-adjoint.
\end{corollary}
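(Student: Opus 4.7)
The plan is to apply Proposition \ref{dualanorm} directly to the data at hand, with Theorem \ref{summet} providing the crucial identification of the $L^{d_S}$-norm as the max of the seminorms $L_i$, and Proposition \ref{nullj} identifying their null spaces. Since $X$ is finite, $C(X) = Lip(X, d_S)$ and all duals coincide with preduals, so there is no subtlety about completions.

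First I would set $W = Lip^0(X, d_S) = C(X)/\bC 1_{C(X)}$ and take the seminorms $\rho_i$ on $W$ to be the $L_i$'s of equation \eqref{partial}, which descend to $W$ because the null space of each $L_i$ contains the constants. I would verify the hypothesis that $\{L_i : i \in I\}$ collectively separates the points of $W$: if $L_i(f) = 0$ for every $i$, then Theorem \ref{summet} gives $L^{d_S}(f) = \max_i L_i(f) = 0$, so $f$ is constant and hence zero in $W$. Moreover, Theorem \ref{summet} shows that the norm $\|\cdot\|_W$ of equation \eqref{sumnorm} built from the $L_i$'s coincides with $L^{d_S}$, while Proposition \ref{nullj} identifies the null space $N_i$ of $L_i$ with the image in $W$ of $C(\tilde X_i) \subseteq C(X)$. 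Plugging these identifications into the conclusion of Proposition \ref{dualanorm} yields the first displayed formula of the corollary.

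For the statement about probability measures, the fact that $M^0_d(X)$ is the dual of $W$ (because $X$ is finite) lets us set $\phi = \mu - \nu$, which lies in $M^0_d(X)$, and invoke the first formula to obtain the asserted expression for $d^{L^{d_S}}(\mu, \nu)$.

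The one remaining point, reducing to self-adjoint $\phi_i$'s, I would handle by an averaging argument along the lines of Remark \ref{remar}. For any $\psi$ on $C(X)$ define $\psi^*$ by $\psi^*(a) = \overline{\psi(a^*)}$. Since each subalgebra $C(\tilde X_i)$ is $*$-closed, the annihilator $N_i^\perp$ is invariant under $\psi \mapsto \psi^*$; and since $L_i$ is a $*$-seminorm on $W$, the dual norm $\|\cdot\|_i$ on $N_i^\perp$ satisfies $\|\psi^*\|_i = \|\psi\|_i$. Now given any decomposition $\mu - \nu = \sum_i \phi_i$, set $\psi_i = (\phi_i + \phi_i^*)/2$; each $\psi_i$ lies in $N_i^\perp$ and is self-adjoint, the triangle inequality gives $\|\psi_i\|_i \leq \|\phi_i\|_i$, and $\sum_i \psi_i = \bigl((\mu-\nu) + (\mu-\nu)^*\bigr)/2 = \mu - \nu$ because $\mu - \nu$ is already self-adjoint. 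Hence the infimum is unchanged when restricted to self-adjoint decompositions, completing the corollary. No step here presents a real obstacle; the only mildly delicate point is the self-adjoint reduction, which is resolved by this standard averaging device.
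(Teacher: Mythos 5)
Your proposal is correct and follows essentially the same route as the paper: the corollary is obtained by applying Proposition \ref{dualanorm} with $W = Lip^0(X,d_S)$, using Theorem \ref{summet} to identify $\|\cdot\|_W$ with $L^{d_S}$ and Proposition \ref{nullj} to identify the null spaces $N_i$ with the images of $C(\tilde X_i)$. Your explicit averaging argument $\phi_i \mapsto (\phi_i+\phi_i^*)/2$ for the self-adjointness reduction is exactly the device the paper itself spells out later for the quantum analogue (Corollary \ref{stated}), and is a sound (indeed more precise) reading of the paper's appeal to Remark \ref{remar}.
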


For specific examples and $\mu$ and $\nu$'s, it can be an interesting challenge to calculate
$d^{L^{d_S}}(\mu, \nu)$,  especially if the index set $I$ is large.


\section{Differential calculi and Dirac-type operators for sum-metrics}
\label{dircom}

In this section we will see how to express the seminorms $L_j$
defined above
in terms of first-order differential 
calculi, and then in terms of Dirac-type operators,
so that they can be combined to apply to $L^{d_S}$.
(By ``Dirac-type operator" we are really referring to \emph{how}
an operator is used, namely to provide a metric, along the lines
described below. Any non-zero self-adjoint operator
can serve as a Dirac-type operator in many ways.)

Let us first recall (e.g. page 321 of\cite{GVF}) the 
construction of the universal first-order
differential calculus for any unital algebra $\cA$ over $\bC$, since
we will somewhat imitate this construction here.
Let $\tilde\O_\cA = \cA \otimes \cA$, viewed as an $\cA$-bimodule
in the evident way. We define a derivation, $\d$, on $\cA$ 
with values in $\tilde\O_\cA$ by
\[
\d a = 1 \otimes a - a \otimes 1 .
\]
The fact that $\d$ is a derivation means that it satisfies
the Leibnitz equality
\[
\d(ab) = \d(a)b + a \d(b)
\]
for $a,b \in \cA$. Then let $\O_\cA $ be
the sub-bimodule generated by
the range of $d$, spanned by elements of the
form $a\d (b)$ (``first-order differential forms''). The pair $(\O_\cA, d)$ 
is the universal first-order differential calculus for $\cA$.

When $X$ is a compact space and $\cA = C(X)$,
then $\tilde\O_\cA = C(X\times X)$ (we use the completion
of the tensor product), and then $\O_\cA$ can be 
identified with $C_\infty(E)$ where 
$E = \{(x,y): x,y \in X, \ x \neq y\}$. Indeed, for any unital
algebra $\cA$ there is a natural identification of $\O_\cA$
with the kernel of the multiplication map from 
$\cA \otimes \cA$ to $\cA$. But for $\cA = C(X)$, when
the algebraic tensor product $C(X) \otimes _{alg} C(X)$
is viewed as a
(dense) subalgebra of $C(X\times X)$, the multiplication
map is the same as restriction of functions to the diagonal, $\D$,
of $X \otimes X$. That map is onto $C(X)$, and continuous.
When extended to all of $C(X\times X)$, its kernel is exactly
the space of functions in $C(X\times X)$ that vanish on $\D$, 
that is, $C_\infty(E)$. (See the ``important example"
shortly after exercise 8.3 of \cite{GVF}.)
Notice that if $X$
is considered to be just a set, then $E$ can be viewed 
as the set of edges of the complete graph with $X$
as the set of vertices.

Returning to the setting of the previous section, we
treat first the case in which $I$ contains just two elements. To
ease the notation, we will denote the two compact spaces by
$X$ and $Y$, with metrics $d_X$ and $d_Y$. We will set
$Z =X \times Y$, and let $d_S$ be the sum-metric on $Z$
using  $d_X$ and $d_Y$. 
For $f \in Lip(Z, d_S)$ we set, much as earlier,
\[
L_1(f) = \sup\{\frac{|f(a,p) - f(b,p)|}{d_X(a,b)}: a,b \in X, p \in Y\}  .
\]
Note that $L_1(f)$ is finite because $f \in Lip(Z, d_S)$,
as seen from Theorem \ref{summet}.
To define a corresponding bimodule we first set  
\[
E = \{ (a,p,b) \in X \times Y \times X: a \neq b\} .
\]
Notice that $E$ is an open subset of $ X \times Y \times X$ and
so may not be compact. We let $\O = C_b(E)$, 
consisting of the bounded continuous functions on $ E $,
equipped with the supremum norm.
Note that $\O$ has an
evident $C(X)$-bimodule structure coming from the two
copies of $X$ in $E$, and thus it also has a $Lip(Z, d_S)$-bimodule
structure. We define a derivation, $\d_1$, on $Lip(Z, d_S)$
with values in $\O$ by
\[
\d_1 f(a, p, b)  = (f(a,p) - f(b,p))/d_X(a,b)  .
\]
We then see that
\begin{equation}
\label{deriv}
L_1(f) = \| \d f\|_\infty      
\end{equation}
for all $f \in Lip(Z, d_S)$.

More generally,  suppose that $X = \Pi_i X_i$, where each
$(X_i, d_i)$ is a compact
metric space,
and we let $d_S$
be the sum-metric on $X$. Then for each $i$ we have
$X = X_i \times \tilde X_i$ where $\tilde X_i = \Pi \{X_j: j \neq i\}$ 
as in the previous section. We can then apply the results just above
to obtain for each $i$ a normed bimodule over $C(X)$ and a
derivation of $C(X)$ into it. We can then form the direct sum
of these bimodules, with supremum norm, and the direct sum
of the derivations, to obtain an expression for $L^{d_S}$ like
equation \ref{deriv}.

Alternatively, we can express all of this in terms of a Dirac-type
operator. Choose (finite) Borel measures on $X$ and $Y$
of full measure,
and let $\mu$ be the corresponding product measure
on $X \times Y \times X$, restricted to $E$.
Let $\cH = L^2(E, \mu)$, and let $\pi$ be the (faithful) representation
of $C(Z)$ (and so of $Lip(Z, d_S)$) 
on $\cH$ defined by $(\pi_f \xi)(a,p,b) = f(a,p)\xi(a,p,b)$ 
for all $\xi \in \cH$. Define a (possibly unbounded) operator, $D_1$,
( for ``Dirac'') on $\cH$ by
\[
(D_1\xi)(a, p, b) = \xi(b,p,a)/d_X(a,b)   
\]
(with domain consisting of those $\xi$ for which $D_1\xi \in \cH$).
A quick calculation shows that for any 
$f \in C(Z)$ the formal commutator $[D_1, \pi_f]$
is given by
\[
([D_1, \pi_f]\xi)(a,p,b) =  ((f(b,p) - f(a,p))/d_X(a,b))\xi(b,p, a)  
\]
for any $\xi \in \cH$. From this we easily obtain:

\begin{proposition}
\label{dir1}
For notation as above,
the functions $f$ for which
$[D_1, \pi_f]$ is a bounded operator on its (dense) domain are exactly
the $f$'s for which $L_1(f) < +\infty$, and for these $f$'s we have
\[
\|[D_1, \pi_f]\| = L_1(f)   .
\]
\end{proposition}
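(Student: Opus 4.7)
The plan is to recognize the commutator $[D_1,\pi_f]$ as a multiplication operator composed with a unitary flip, and then identify the operator norm of that multiplication operator with the supremum defining $L_1(f)$.

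First I would observe that the formula
\[
([D_1,\pi_f]\xi)(a,p,b) = g(a,p,b)\,\xi(b,p,a), \qquad
g(a,p,b) := \frac{f(b,p)-f(a,p)}{d_X(a,b)},
\]
factors as $[D_1,\pi_f] = M_g \circ J$, where $M_g$ is pointwise multiplication by $g$ and $J$ is the ``flip'' $(J\xi)(a,p,b) = \xi(b,p,a)$. The set $E$ is invariant under the swap $(a,p,b)\mapsto(b,p,a)$, and the measure $\mu$, being the restriction to $E$ of a product measure in which both copies of $X$ carry the same Borel measure, is also invariant. Hence $J$ is unitary (indeed a self-adjoint involution) on $\cH = L^2(E,\mu)$, so $\|[D_1,\pi_f]\| = \|M_g J\| = \|M_g\|$.

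Next I would invoke the standard identification $\|M_g\|_{\mathrm{op}} = \|g\|_{L^\infty(E,\mu)}$. Since $f$ is continuous on $Z$ and $d_X$ is continuous and strictly positive on the open set $E$, the function $g$ is continuous on $E$. Because the measures chosen on $X$ and $Y$ have full support, the product measure $\mu$ has full support in $E$, so the essential supremum of the continuous function $|g|$ coincides with its genuine supremum. But that supremum is exactly
\[
\sup_{(a,p,b)\in E} \frac{|f(a,p)-f(b,p)|}{d_X(a,b)} = L_1(f),
\]
by the definition of $L_1$. Putting the pieces together, $[D_1,\pi_f]$ extends to a bounded operator on $\cH$ precisely when $g$ is (essentially) bounded, i.e.\ when $L_1(f) < +\infty$, and in that case $\|[D_1,\pi_f]\| = L_1(f)$.

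The only mildly delicate point is the measure-theoretic identification of $\|M_g\|_{\mathrm{op}}$ with $\sup_E|g|$ rather than just an essential supremum; this is handled by the continuity of $g$ together with the full-support assumption on the chosen measures on $X$ and $Y$. A secondary bookkeeping issue is to be careful that ``bounded on its (dense) domain'' is interpreted as: the commutator, defined a priori on $\{\xi\in\cH : D_1\xi\in\cH\}$ (which is dense and contains, for instance, $\xi$'s of bounded support on which $g$ is bounded), extends continuously to all of $\cH$ if and only if $g\in L^\infty$. Once these technicalities are in place, the equality $\|[D_1,\pi_f]\| = L_1(f)$ follows immediately from the factorization $M_g J$.
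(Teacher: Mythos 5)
Your proof is correct and follows essentially the same route as the paper, which simply records the commutator formula $([D_1,\pi_f]\xi)(a,p,b) = ((f(b,p)-f(a,p))/d_X(a,b))\,\xi(b,p,a)$ and asserts that the proposition follows easily; your factorization $[D_1,\pi_f]=M_gJ$ with $J$ a unitary involution, together with $\|M_g\|=\operatorname*{ess\,sup}|g|=\sup_E|g|=L_1(f)$ via continuity of $g$ and full support of $\mu$, is exactly the intended filling-in of that ``easily.'' Your attention to the domain issue (witnessing unboundedness using vectors supported away from the set $\{a=b\}$, where $g$ is bounded) is a worthwhile detail the paper leaves implicit.
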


We remark that often $D_1$ does not have compact resolvent,  and
thus often $(Lip(Z, d_S), \cH, D_1)$ is not a ``spectral triple'' as 
defined by Connes. 

We can now treat the general case in which $X = \Pi_i X_i$ and
$d_S$ is the sum-metric of the $d_i$'s. For each $i$ let
$\tilde X_i = \Pi \{X_j: j \neq i\} $ as done earlier, 
so that $X = X_i \times \tilde X_i$.
We let $\tilde d_i$ be the metric on $\tilde X_i$ that is the sum-metric
of the $d_j$'s for which $j \neq i$. Then 
$d_S$ is, according to Proposition \ref{sumadd}, the sum-metric
of $d_i$ and $\tilde d_i$. Let $L_i$ be the 
seminorm on $C(X)$ for the decomposition $X = X_i \times \tilde X_i$
using equation \ref{partial} and $d_i$, and
let $(\cH_i, \pi^i)$
be the representation of $C(X)$ constructed as above, with
Dirac operator $D_i$, so that 
\[
L_i(f) = \|[D_i, \pi^i f]\|   .
\]
It is easily seen that 
this $L_i$ coincides with the $L_i$ of equation \ref{partial},
so that
\[
L^{d_S}(f) = \max_i\{ L_i(f)\}  .
\]
Now let $\cH = \bigoplus \cH_i$, let $\pi = \bigoplus \pi_i$,
and $D = \bigoplus D_i$. It is then clear that
\[  
L^{d_S}(f) = \| [D, \pi(f) ] \|  
\]
for every $f \in Lip(X, d_S)$.

To proceed further we now need to specialize somewhat 
by using another aspect of the Hamming metric.


\section{Complete-graph metrics, and their Dirac operators}
\label{seccomp}

The Hamming metric involves setting the distance between
the different letters of any given one of its 
alphabets to always be 1. This is
the metric on a complete graph for which all edges have
length 1. If $X$ is a finite set and $d$ is the corresponding
complete-graph metric, then the
corresponding Lipschitz seminorm, $L_c$ (``c'' for ``complete''), 
is defined by
\[
L_c(f) = \max \{|f(x)- f(y)|: x \neq y\}.
\] 
for all $f \in C(X)$. But this has meaning whenever $(X, d_X)$
is a compact metric space, although then this $L_c$ will not
be a C*-metric if $X$ is not finite, because requirement (4) of
Definition \ref{deflip} will fail. 
The seminorm $L_c$ is now given exactly by
he universal differential
calculus for the algebra $C(X)$, with $\O = C_\infty(E)$ for 
$
E = \{(x,y); x \neq y\}  .
$
(When $X$ is finite this is the edge-set of the complete graph.)
As before, the derivation, $\d$,
from $C(X)$ into $\O$ is defined by
\[
\d f(x,y)  = f(y) - f(x)  
\]
for any $(x,y) \in E$. 
It is clear that
\[
L_c(f) = \| \d f\|_\infty      
\]
for every $f \in C(X)$, as desired.

As before, $L_c$ can also
be represented by a Dirac-type operator, which
will be bounded in this case. For $\cH = L^2(E, \mu)$
where $\mu$ is the product with itself of
a finite Borel measure of full support on $X$, the 
Dirac operator, $D$, is defined by
 $D\xi(x, y) = \xi(y,x)$.
 
But there is another way of viewing this situation, which we will see
is more useful for obtaining the quantum Hamming metric.
Let $f \in C(X)$ be $\bR$-valued, let $x_M$ and $x_m$ be points
where $f$ takes its maximum and minimum values,
and let $s = (f(x_M) +f(x_m))/2$. Then one sees quickly that
\[
L_c(f) = 2\|f - s1_X\|_\infty  .
\] 
Even more, one sees easily that $s$ is the constant for which
the expression on the right takes its minimum value. That
is 
\begin{equation}
\label{nearest}
L_c(f) = 2\inf \{\|f - t1_X\|_\infty: t \in \bR\},
\end{equation}
or, $s1_X$ is the closest function in $\bC 1_X $ to $f$.
But this latter expression makes sense for $\bC$-valued
functions, where we also allow $ t \in \bC$. That is,
we can define $L_q$ (``q'' for ``quotient'') on $C(X)$ by
\[
L_q(f) = \inf \{\|f - z1_X\|_\infty: z \in \bC\}  ,
\]
so that $L_q$ is the quotient norm on $C(X)/\bC1_X$
pulled back to $C(X)$. But this is exactly the seminorm $L^0$
that we have been using earlier. Thus
$L_c = 2L^0$ on real-valued functions. However,
these two $*$-seminorms, $L_c$ and  $2L^0$, 
can differ on $\bC$-valued 
functions. Here is an example, well-known for other
purposes (see example 1.35 of \cite{Wvr4}). 

\begin{example}
Let $X$ be a set with 3 points, all at
distance 1 from each other, and let $f$ be the $\bC$-valued
function that sends the 3 point to the 3 cube-roots of
1. Then the constant function which is closest to $f$ is easily
seen to be the constant function 0,  so that 
$2L^0(f) = 2\inf \{\|f - t1_X \|_\infty: t \in \bR\} = 2$, whereas $L_c(f) = \sqrt 3$.
\end{example} 

We can also find a Dirac operator for $L^0$ (and so for $2L^0$).
The construction is somewhat more complicated
than that for $L_c$,
but it is suggestive of what one might do in the
quantum case.
For simplicity we will discuss here only the case when
$X$ is finite. But the discussion we give has a good
generalization to the case when $C(X)$ is replaced
by any unital (possibly non-commutative and 
infinite-dimensional) C*-algebra -- see theorem 3.2
of \cite{R24} and the constructions in its proof.

\begin{proposition}
\label{project}
Let $X$ be a finite set, and let $\cH = \ell^2(X)$, with $C(X)$
represented on $\cH$ as pointwise multiplication operators.
Let $\cP$ be the set of all rank-one projections on $\cH$.
Then for every $f \in C(X)$ we have
\[
L^0(f) = \sup \{\|[P, \ f]\|\}: P \in \cP\}  .
\]
\end{proposition}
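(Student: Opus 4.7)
The plan is to compute $\|[P, f]\|$ explicitly for a rank-one projection $P = |\xi\rangle\langle\xi|$ with unit vector $\xi$, and then to recognize the resulting quantity as a variance, whose supremum over probability distributions on $X$ can be identified with $L^0(f)$.

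First I would decompose $\cH = P\cH \oplus P^\perp \cH$ and write $f$ in block form. A short computation gives $[P,f]$ the form $\begin{pmatrix} 0 & u^* \\ -v & 0\end{pmatrix}$, with $v = P^\perp f \xi$ and $u = P^\perp f^* \xi$, so that $\|[P, f]\| = \max(\|v\|, \|u\|)$. Since $f$ acts as a multiplication operator, $f^* f = f f^* = |f|^2$, and both $\|v\|^2$ and $\|u\|^2$ reduce to $\omega_\xi(|f|^2) - |\omega_\xi(f)|^2$, where $\omega_\xi(a) = \langle \xi, a \xi\rangle$. Setting $p_\xi(x) = |\xi(x)|^2$, which ranges over all probability distributions on $X$ as $\xi$ runs over unit vectors of $\cH$, I obtain
\[
\|[P, f]\|^2 = \sum_x p_\xi(x) |f(x)|^2 - \Big|\sum_x p_\xi(x) f(x)\Big|^2 =: \mathrm{Var}_{p_\xi}(f).
\]

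The upper bound $\sup_P \|[P, f]\| \leq L^0(f)$ is then routine: since $[P, f] = [P, f - z1_X]$ for every $z \in \bC$, and since $\mathrm{Var}_p(g) \leq \sum_x p(x) |g(x)|^2 \leq \|g\|_\infty^2$ for any $g$, applying this to $g = f - z1_X$ gives $\|[P, f]\|^2 \leq \|f - z1_X\|_\infty^2$, and taking the infimum over $z$ yields the claim.

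The reverse inequality is the subtler step. Since $X$ is finite, the infimum $L^0(f) = \inf_z \|f - z1_X\|_\infty$ is attained at some $z_0 \in \bC$, and I set $r = L^0(f) = \|f - z_0 1_X\|_\infty$. I would then invoke the classical characterization of a Chebyshev center: $0 \in \mathrm{conv}\{f(x) - z_0 : x \in S\}$, where $S = \{x \in X : |f(x) - z_0| = r\}$. If this failed, a real-linear functional separating $0$ from this convex hull would produce a small perturbation of $z_0$ strictly decreasing $\|f - z_0 1_X\|_\infty$, contradicting minimality. Consequently there is a probability distribution $p$ supported on $S$ with $\sum_x p(x)(f(x) - z_0) = 0$. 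Choosing any unit vector $\xi$ with $|\xi(x)|^2 = p(x)$ and taking $P = |\xi\rangle\langle\xi|$, the formula above gives
\[
\|[P, f]\|^2 = \mathrm{Var}_p(f) = \sum_{x \in S} p(x) |f(x) - z_0|^2 = r^2.
\]
Combining the two inequalities yields $\sup_P \|[P, f]\| = L^0(f)$. The main obstacle is the Chebyshev-center geometric step; the block-matrix computation and the upper bound are essentially formal.
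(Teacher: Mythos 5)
Your proof is correct, but it follows a genuinely different route from the paper's. The paper never computes $\|[P,f]\|$ exactly: for the upper bound it uses $\|[P,g]\|=\|[P-\tfrac12 1,g]\|\le\|g\|_\infty$, and for the lower bound it takes a Hahn--Banach norming functional $\phi$ on $C(X)/\bC 1$ with $\|\phi\|=1=\phi(f)/L^0(f)$, writes $\phi(g)=\<g\xi,\eta\>$ via the polar decomposition of $\phi\in\ell^1(X)$ (so $\xi\perp\eta$ are unit vectors), and observes that the rank-one projection $P$ along $\eta$ satisfies $\<[P,f]\xi,\eta\>=\phi(f)$, whence $\|[P,f]\|\ge L^0(f)$. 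You instead establish the exact identity $\|[P,f]\|^2=\mathrm{Var}_{p_\xi}(f)$ for $P=|\xi\>\<\xi|$ via the block decomposition (which is right: the commutator is off-diagonal with blocks $P f P^\perp$ and $-P^\perp f P$, of norms $\|P^\perp f^*\xi\|$ and $\|P^\perp f\xi\|$, both equal to the variance since $f$ is normal), so the proposition becomes the statement that the maximal standard deviation over probability measures on $X$ equals the Chebyshev radius of $f$; the lower bound then comes from the standard characterization of the Chebyshev center ($0$ lies in the convex hull of $\{f(x)-z_0: |f(x)-z_0|=r\}$), which produces an optimal distribution $p$ and hence an optimal $P$. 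Your argument buys an explicit formula for $\|[P,f]\|$, identifies exactly which projections attain the supremum, and shows the supremum is achieved; the paper's softer duality-plus-polar-decomposition argument is the one that survives the passage to the noncommutative setting of Theorem \ref{commutant}, where $f$ need not be normal, $\cB$ is larger than $\bC 1_\cA$, and the variance identity breaks down. One small point worth making explicit if you write this up: the separating-functional perturbation argument for the Chebyshev center uses finiteness of $X$ (the non-extremal values of $|f(x)-z_0|$ are bounded away from $r$), which is exactly the hypothesis of the proposition, so there is no gap.
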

\begin{proof}
Note that for any $g \in C(X)$ we have 
$\|[P, \ g]\| = \|[P-(1/2)1_X, \ g]\| \leq \|g\|_\infty$,
since $\| P-(1/2)1_X\| = 1/2$.
 Then for any $P \in \cP$ and $z \in \bC$ we have
$\|[P, \ g]\| = \|[P, \ g-z1_X]\| \leq  \|g-z1_X\|_\infty$, and so
$\sup \{\|[P, \ g]\|\}: P \in \cP\}  \leq L^0(g)$.
Now fix $f \in C(X)$.
For ease of bookkeeping in obtaining the reverse inequality,
let us assume that $L^0(f) = 1$. The first basic result of linear
approximation theory tells us that then there is a linear
functional, $\phi$, on $C(X)$ such that $ \|\phi\| = 1 = \phi(f)$
and $\phi(1)= 0$. (Just consider the image of $f$ in $C(X)/\bC 1$, 
whose norm there is $L^0(f)$, so that by the Hahn-Banach theorem
there is a linear functional on $C(X)/\bC 1$ of norm 1 whose value
on the image of $f$ is $L^0(f)$, and pull this linear functional
back to $C(X)$.) Because $X$ is finite, $\phi$ can be viewed as
an element of $\ell^1(X)$ with $\|\phi\|_1 = 1$. 
Let $\xi = v|\phi|$ be the polar
decomposition of $\phi$, and then set $\xi = |\phi|^{1/2}$
and $\eta = v|\phi|^{1/2}$. Then $\xi, \eta \in \cH$, and
$\|\xi\|_2 = 1 = \|\eta\|_2$, and
$\phi(g) = \<g\xi, \eta\>$ for every $g \in C(X)$. From the
fact that $\phi(1) = 0$ we see that $\xi \perp \eta$. Let
$P$ be the rank-one projection along $\eta$, and notice 
that $P\xi = 0$. Then 
\[
\<[P,g]\xi, \eta\> = \<Pg\xi, \eta\> - \<gP\xi, \eta\> 
=\<g\xi, \eta\> = \phi(g)  
\]
for every $g \in C(X)$. In particular, 
$\<[P,f]\xi, \eta\> = \phi(f) = 1$, so that 
$\|[P,f]\| \geq 1= L^0(f)$, as needed.
\end{proof}

The rank-1 projections on $\cH$ constitute the projective
space, $\cP(\cH)$, over $\cH$. It is compact for its natural
topology. Choose a Borel measure of full support on $\cP(\cH)$
and form the Hilbert space $L^2(\cP(\cH), \cH)$ of
$\cH$-valued functions on $\cP(\cH)$. 
Let $C(X)$ act on $L^2(\cP(\cH), \cH)$ by letting each 
$f \in C(X)$ act on the copy of $\cH$ over each point
$P$ in its usual way.
Let $D$ be
the tautological function on $\cP(\cH)$ that to each
point $P \in \cP(\cH)$ assigns the operator $P$ acting
on the copy of $\cH$ over $P$, and view $D$ as
the operator on $L^2(\cP(\cH), \cH)$ 
of pointwise application of the operators $P$. 
From Proposition \ref{project} we see
that 
\[
L^0(f) = \| [D, f] \|
\]
for each $f \in C(X)$. Thus $D$ serves as a Dirac operator
for $L^0$. 

We remark that when $X$ is a finite set we saw that the
Dirac operator for $L_c$ acts on a finite-dimensional
Hilbert space, while the Dirac operator for $L^0$ 
given just above still
acts of a Hilbert space of infinite dimension. It is an
interesting question as to whether for finite $X$ 
there is in general
a Dirac operator for $L^0$ that acts on a Hilbert
space that is finite-dimensional.


\section{The definition of the quantum Hamming metrics}
\label{alg}

We now combine some of the results obtained above to
reformulate the Hamming metric in terms of the algebra
$C(X)$ without mentioning points, so that if we drop
the requirement that the algebra be commutative, we
obtain the quantum version, which provides our
definition of the quantum Hamming metric. But then
we will notice that the definition makes sense in a
considerably broader context. 

In this
section sets like $X$ are finite.
It is helpful to begin by considering the case in which
our index set $I$ contains only two elements, $1$ and $2$. 
We set $\cA_i = C(X_i)$ for $i = 1,2$, 
so that $\cA = C(X_1 \times X_2) = C(X_1)\otimes C(X_2) 
= \cA_1 \otimes \cA_2$.
Then
 \[
L_1(f) = \sup  \{|f(v_1, v_2)-f(w_1, v_2)|  \}
\]
as $v_1, w_1$ range over $X_1$ and $v_2$ ranges over $X_2$.
 Let $f$ be $\bR$-valued. Then by the argument preceding equation \ref{nearest}, 
 for each $v_2$ there is an $r_{v_2} \in \bR$ such that
 \[
 \sup \{ |f(v_1, v_2)-f(w_1, v_2)|  : v_1, w_1 \in X_1\}
 = 2 \| f(\cdot, v_2) - r_{v_2} 1_{\cA_1}\|_\infty .
 \]
 Define $h\in \cA_2=C(X_2)$ by $h(v_2) = r_{v_2}$ for each $v_2 \in X_2$. 
Then
we see that 
 \[
L_1(f) = 2 \inf \{\|f- 1_{\cA_1}\otimes k\|: k\in \cA_2\} = 2\|f\|_{\cA/\cA_2}
 \]
 (because we can use $k = h$),
where $ \|f\|_{\cA/\cA_2}$ is the pull-back to
$\cA$ of the quotient norm on $\cA/\cA_2$. 
   
Recall that $L_H$ was provisionally defined shortly
after remark \ref{remar}.
A proof by induction on the size of $I$ then gives:

\begin{theorem}
For each $i \in I$ let $X_i$ be a finite set, and let $X = \prod_i X_i $,
so that $C(X) = \otimes_i C(X_i)$.
For each $i \in I$ 
let $\tilde X_i = \prod\{X_j: j \neq i\}$, 
so that $C(X) =  C(X_i) \otimes C(\tilde X_i)$.  
Define a seminorm $L_i$ on $C(X)$ by 
\[
L_i(f) = 2\|f\|_{C(X)/C(\tilde X_i)}
\]
for any $f \in C(X)$.
Then for any $\bR$-valued $f$ we have
\[
L_H(f) = \max \{L_i(f): i \in I\}  .
\]
\end{theorem}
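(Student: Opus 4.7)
The plan is to invoke Theorem \ref{summet} and reduce the theorem to a per-coordinate comparison of two seminorms: the ``partial Lipschitz'' seminorm of equation \eqref{partial} and the ``quotient-norm'' expression that defines $L_i$ in the statement. Since each $d_i$ is the complete-graph metric that assigns distance $1$ to distinct letters, $d_H$ is the sum-metric of the $d_i$'s, so Theorem \ref{summet} gives $L_H(f) = \max_i M_i(f)$, where $M_i$ denotes the seminorm of equation \eqref{partial}. Because $d_i$ takes only the value $1$ on distinct pairs, we may write $M_i(f) = \sup\{|f(v_i, \tilde v) - f(w_i, \tilde v)| : v_i, w_i \in X_i, \ \tilde v \in \tilde X_i\}$ under the identification $X = X_i \times \tilde X_i$. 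It therefore suffices to show, for each $i$ and each $\bR$-valued $f$, that $M_i(f) = 2\|f\|_{C(X)/C(\tilde X_i)}$.

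To prove this identity, I would apply the $n=2$ argument spelled out just before the theorem, with the roles of $X_1, X_2$ played by $X_i, \tilde X_i$. For each fixed $\tilde v \in \tilde X_i$, the slice $f(\cdot, \tilde v)$ is an $\bR$-valued function on $X_i$, so the identity $L_c = 2L^0$ from Section \ref{seccomp} provides a scalar $r_{\tilde v} \in \bR$ (namely the midpoint of the maximum and minimum of $f(\cdot, \tilde v)$) such that
\[
\sup\{|f(v_i, \tilde v) - f(w_i, \tilde v)| : v_i, w_i \in X_i\} = 2\|f(\cdot, \tilde v) - r_{\tilde v} 1_{X_i}\|_\infty.
\]
Because $\tilde X_i$ is finite, the assembled function $h(\tilde v) := r_{\tilde v}$ automatically lies in $C(\tilde X_i)$ with no regularity to check. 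Taking the sup over $\tilde v$ on both sides yields $M_i(f) = 2\|f - 1_{X_i} \otimes h\|_\infty$.

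It remains to verify that $h$ achieves the infimum defining the quotient norm: for any other $k \in C(\tilde X_i)$ and each $\tilde v$, the defining property of $r_{\tilde v}$ gives $\|f(\cdot, \tilde v) - k(\tilde v) 1_{X_i}\|_\infty \geq \|f(\cdot, \tilde v) - r_{\tilde v} 1_{X_i}\|_\infty$, and taking the sup over $\tilde v$ lifts this to $\|f - 1_{X_i} \otimes k\|_\infty \geq \|f - 1_{X_i} \otimes h\|_\infty$. Hence $M_i(f) = 2\|f\|_{C(X)/C(\tilde X_i)} = L_i(f)$, and combining with $L_H(f) = \max_i M_i(f)$ finishes the proof. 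The main subtlety is the pointwise-then-assemble step: the identity $L_c = 2L^0$ underlying the choice of $r_{\tilde v}$ fails for $\bC$-valued $f$ (cf.\ the cube-roots-of-unity example in Section \ref{seccomp}), which is precisely why the theorem restricts to $\bR$-valued $f$; and the finiteness of $\tilde X_i$ is what ensures the pointwise choices aggregate into an element of $C(\tilde X_i)$, bypassing measurability issues that would appear for infinite alphabets.
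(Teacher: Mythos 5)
Your proof is correct and follows essentially the same route as the paper: the paper sketches exactly this slice-by-slice midpoint argument for the two-factor case (the computation preceding the theorem, resting on equation \eqref{nearest}) and then invokes induction on $|I|$, while you package the same argument by applying Theorem \ref{summet} directly to the splitting $X = X_i \times \tilde X_i$ for each $i$. Your closing remarks on why real-valuedness and finiteness are needed match the paper's own discussion in Section \ref{seccomp}.
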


Notice that the $L_i$'s above are defined just in terms of the algebras,
with no mention of points. Thus we can drop the assumption that
the algebras are commutative, and take the above formulation as
the \emph{definition} of the quantum Hamming metric.

\begin{definition}
\label{sham}
For each $i \in I$ let $\cA_i$ be a 
matrix algebra $M_{n_i}(\bC)$, and let $\cA = \otimes_i \cA_i $.
For each $i$
let $\tilde \cA_i = \otimes\{\cA_j: j \neq i\}$, 
so that $\cA =  \cA_i \otimes \tilde \cA_i$ 
when $\cA_i$ and $\tilde \cA_i$ are viewed as subalgebras
of $\cA$ in the evident way.
Define a $*$-seminorm, $L_i$, on $\cA$ by 
\begin{equation}
\label{defLi}
L_i(a) = 2\|a\|_{\cA/ \tilde \cA_i} = 2\inf\{\|a - c\|: c \in \tilde \cA_i\}
\end{equation}
for any $a \in \cA$. 
Then define a $*$-seminorm, $L_{qH}$, on $\cA$ by 
\begin{equation}
\label{hamdef}
L_{qH}(a) = \max \{L_i(a): i \in I\}  .
\end{equation}
This $L_{qH}$ is our \emph{quantum Hamming metric} on $\cA = \otimes_i \cA_i $.
\end{definition}

This definition corresponds to proposition 8 in \cite{DMTL}, except that
we do not restrict $a$ to being self-adjoint. The above definition immediately
makes sense when the $ \cA_i $'s are arbitrary finite-dimensional
C*-algebras. Even in this slightly more general case
we will see in Section \ref{hams} that $L_{qH}$ is a C*-metric that 
is strongly Leibniz.    

But on examining the above definition, we see that it has meaning even
when the $\cA_i$'s are arbitrary, perhaps not finite-dimensional, 
unital C*-algebras, once we clarify that we
will always use the minimal C*-tensor product \cite{KR2}. Without
further comment we will use the fact that the minimal C*-tensor product
is associative and respects sub-algebras and product states 
in the ways one would expect, as explained in 
section 11.3 of \cite{KR2}. We will never need to take quotients
by 2-sided \emph{ideals}, and so we need not be concerned with the
fact that the minimal C*-tensor product does not always work for quotients by ideals 
in the ways one would expect. See section 3.7 of \cite{BrO}.

It is clear that in this general case $L_{qH}$ will
still be a bounded seminorm on $\cA$. We will see that it
produces a finite metric on $S(\cA)$. But unless all the $\cA_i$'s
are finite-dimensional, the topology on $S(\cA)$ from this metric will not
agree with the weak-$*$ topology, and will not be compact.
Thus in this setting 
we will call  $L_{qH}$ the ``quantum Hamming seminorm",
reserving the term ``quantum Hamming metric'' for the case
in which the C*-algebras are finite-dimensional (but not
necessarily just full matrix algebras) so that $L_{qH}$ is
a C*-metric, as we will see in the next section. 


\section{The diameter of a quantum Hamming seminorm}
\label{hams}

We seek now a generalization of Proposition \ref{bound}. 
In the process we will obtain further properties of $L_{qH}$. 
We continue assuming that  the $\cA_i$'s 
are arbitrary unital C*-algebras,
and that $\cA$ is their tensor product.  We define the subalgebras
$\tilde \cA_i$ of $\cA$ exactly as in \ref{defLi}, and then define the
seminorms $L_i$ exactly as above. Thus we set
\[
L_i(a) = 2\|a\|_{\cA/\tilde \cA_i }   .
\]
We then define the seminorm
$L_{qH}$ exactly as above in \ref{hamdef}, and we refer to it as a 
quantum Hamming seminorm. We need to show that $L_{qH}$
is actually a norm not just a seminorm, on $\cA^0 = \cA/\bC 1_\cA$,
and that it is equivalent to the norm $L^0$.   
Since each of the $L_i$'s is strongly Leibniz (by theorem 3.2
of \cite{R24}), so is $L_{qH}$ (by proposition 1.2 of \cite{R21}).

Our generalization of Proposition \ref{bound}.
is more complicated than in the commutative case, and the
proof we give is strongly motivated by the proof of proposition 2 of \cite{DMTL}. 
(I thank Eleanor Rieffel for deciphering for me equation 17 of that
proof for $n=2$.) We need to totally order our index set $I$, so we now
take it to be the set of integers between $1$ and $n$.
For any $k \in I$ with $k \geq 2$ we set 
\[
B_k = \otimes _{i=1}^{k-1} \cA_i \quad \quad \mathrm{and} \quad 
\quad  C_k = \otimes _{i=k}^n \cA_i   \ ,
\]   
so that for each $k$ we have
$\cA = \cB_k \otimes \cC_k$. 
For any linear functional on $\cA$, say $\psi$, 
and for any unital C*-subalgebra $\cD$ of 
$\cA$, we denote the restriction of $\psi$ to $\cD$ by $\psi_\cD$, 
which is a linear functional on $\cD$.
To simplify notation we make the following conventions. For each $i$ let $1_i$
be the identity element of $\cA_i$, whereas if $K$ is a subinterval of $I$
of length $p$ then $I_p$ will denote the tensor product of the $1_i$'s for
$i \in K$. Whenever it is evident from the context, we will omit mentioning $K$,
and whenever it should not cause confusion, we will denote the C*-subalgebra
$I_{k-1} \otimes \cA_k \otimes I_{n-k}$ of $\cA$ simply by $\cA_k$. Then
$\tilde \cA_1 = 1_1 \otimes \cC_{2}$ and $\tilde \cA_n =  \cB_n \otimes 1_n$,
while $\tilde \cA_k =   \cB_k \otimes 1_k \otimes \cC_{k+1}$ for $2 \leq k \leq n-1$.

\begin{lemma}
\label{lemsum}
Let $\mu$ and $\nu$ be states on $\cA$, and let $\phi = \mu - \nu$. 
For each $k$ with $k \geq 2$ set
\[
\rho_k = \nu_{\cB_k} \otimes \mu _{\cC_k},
\] 
which is a state on $\cA$,
and for each $k$ with $2 \leq k \leq n-1$ set
\[
 \phi_k = \rho_k - \rho_{k+1}.
 \]
Then $\phi_k$ has the following properties for 
each $k$ with $2 \leq k \leq n-1$:
\begin{enumerate}
\item
The restriction of $\phi_k$ to 
$\tilde \cA_k = \cB_k \otimes 1_k \otimes \cC_{k+1}$ is the
0 functional.
\item
The restriction of $\phi_k$ to $\cA_k = I_{k-1} \otimes \cA_k \otimes I_{n-k}$ coincides
with $\phi_{\cA_k}$.
\end{enumerate}
Set $\phi_1 = \mu - \rho_2$. Then
\begin{enumerate}
\item
The restriction of $\phi_1$ to $\tilde \cA_1 = 1_1 \otimes \cC_{2}$ is the
0 functional.
\item
The restriction of $\phi_1$ to $\cA_1 = \cA_1 \otimes I_{n-1}$ coincides
with $\phi_{\cA_1}$.
\end{enumerate}
Set $\phi_n = \rho_n  - \nu$. Then
\begin{enumerate}
\item
The restriction of $\phi_n$ to $\tilde \cA_n = \cB_n \otimes 1_n$ is the
0 functional.
\item
The restriction of $\phi_n$ to $\cA_n =  I_{n-1} \otimes  \cA_n$ coincides
with $\phi_{\cA_n}$.
\end{enumerate}
Furthermore, $\phi = \sum \phi_k$ (a ``telescoping sum").
\end{lemma}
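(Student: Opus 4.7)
The plan is to unravel each $\rho_k$ into a product of three factors keyed to the decomposition $\cA = \cB_k \otimes \cA_k \otimes \cC_{k+1}$, and then read off all six bullet points by direct inspection. Concretely, writing $\cC_k = \cA_k \otimes \cC_{k+1}$ and $\cB_{k+1} = \cB_k \otimes \cA_k$, the restriction of $\mu$ to $\cC_k$ factors through product states on $\cA_k$ and $\cC_{k+1}$ only in the sense that $\mu_{\cC_k}(a \otimes c) = \mu_{\cA_k \otimes \cC_{k+1}}(a \otimes c)$; but what makes the computation clean is that $\rho_k$ is itself built as a product state, so one may further split
\[
\rho_k = \nu_{\cB_k} \otimes \mu_{\cA_k} \otimes \mu_{\cC_{k+1}}, \qquad
\rho_{k+1} = \nu_{\cB_k} \otimes \nu_{\cA_k} \otimes \mu_{\cC_{k+1}},
\]
where $\mu_{\cA_k}$ and $\mu_{\cC_{k+1}}$ denote the restrictions of the (single) state $\mu$ to the respective subalgebras, and similarly for $\nu$. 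Subtracting gives the key formula
\[
\phi_k = \nu_{\cB_k} \otimes (\mu_{\cA_k} - \nu_{\cA_k}) \otimes \mu_{\cC_{k+1}}
\quad \text{for } 2 \leq k \leq n-1.
\]

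From this factorization the two bulleted properties for $2\leq k\leq n-1$ are immediate. For property (1), evaluating on $b\otimes 1_k\otimes c \in \tilde\cA_k$ yields $\nu_{\cB_k}(b)\,(\mu_{\cA_k}-\nu_{\cA_k})(1_k)\,\mu_{\cC_{k+1}}(c) = 0$ since the middle factor vanishes. For property (2), evaluating on $I_{k-1}\otimes a\otimes I_{n-k}$ gives $1\cdot (\mu-\nu)_{\cA_k}(a)\cdot 1 = \phi_{\cA_k}(a)$, using that $\mu$ and $\nu$ are states so their restrictions to any $1$ are $1$.

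The two boundary cases $k=1$ and $k=n$ are handled by the same device, with one of the three tensor factors absent. Writing $\phi_1 = \mu - \nu_{\cA_1}\otimes\mu_{\cC_2}$ and $\phi_n = \nu_{\cB_n}\otimes\mu_{\cA_n} - \nu$, one checks property (1) by evaluating on $1_1\otimes c$ (resp.\ $b\otimes 1_n$) and observing that $\mu(1_1\otimes c) = \mu_{\cC_2}(c)$ while $\nu_{\cA_1}(1_1) = 1$ (resp.\ the symmetric identity), giving $0$. Property (2) is similarly immediate since $\mu_{\cC_2}(I_{n-1}) = 1$ and $\nu_{\cB_n}(I_{n-1}) = 1$.

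Finally, $\phi = \sum_{k=1}^n \phi_k$ is a direct telescoping:
\[
\sum_{k=1}^n \phi_k \;=\; (\mu - \rho_2) + \sum_{k=2}^{n-1}(\rho_k - \rho_{k+1}) + (\rho_n - \nu) \;=\; \mu - \nu \;=\; \phi.
\]
There is no real obstacle here — the whole lemma is essentially a bookkeeping exercise. The only subtlety worth flagging is the harmless notational overload whereby $\mu_{\cA_k}$, $\nu_{\cB_k}$, etc.\ denote restrictions of the (possibly non-product) states $\mu$ and $\nu$ to subalgebras, while $\rho_k$ is by construction a product of two such restrictions; the splitting $\rho_k = \nu_{\cB_k}\otimes \mu_{\cA_k}\otimes\mu_{\cC_{k+1}}$ above is legitimate precisely because the middle and right factors both come from the single state $\mu$, so the associativity of the minimal C*-tensor product and the fact that $\mu_{\cC_k}(a\otimes c)$ is determined by $a$ and $c$ cause no ambiguity.
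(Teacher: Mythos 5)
Your ``key formula''
\[
\phi_k \;=\; \nu_{\cB_k} \otimes (\mu_{\cA_k} - \nu_{\cA_k}) \otimes \mu_{\cC_{k+1}}
\]
rests on the splittings $\mu_{\cC_k} = \mu_{\cA_k}\otimes\mu_{\cC_{k+1}}$ and $\nu_{\cB_{k+1}} = \nu_{\cB_k}\otimes\nu_{\cA_k}$, and these are false for general states: the restriction of a state to a tensor product of subalgebras is \emph{not} the product of its marginals unless the state happens to factor across that splitting. (Take $n=3$, $k=2$, and let $\mu$ restrict to an entangled pure state on $\cA_2\otimes\cA_3=\cC_2$; then $\mu_{\cC_2}\neq\mu_{\cA_2}\otimes\mu_{\cA_3}$, so $\rho_2=\nu_{\cB_2}\otimes\mu_{\cC_2}$ is not $\nu_{\cA_1}\otimes\mu_{\cA_2}\otimes\mu_{\cA_3}$, and your expression for $\phi_2$ evaluated on a general elementary tensor $b\otimes a\otimes c$ disagrees with the true value.) The remark at the end of your write-up --- that the splitting is ``legitimate because the middle and right factors both come from the single state $\mu$'' --- is exactly the non-sequitur: coming from the same state does not make a restriction a product state. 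The lemma is stated for arbitrary states $\mu,\nu$ and is applied that way in Theorem \ref{diamer}, so you cannot assume any product structure.

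The conclusions you reach are nevertheless correct, and the repair is short: the only elements you ever evaluate on have the identity in one tensor slot, and there the false factorization coincides with the truth because of the \emph{consistency} of restrictions, e.g. $\mu_{\cC_k}(1_k\otimes c)=\mu_{\cC_{k+1}}(c)$ and $\nu_{\cB_{k+1}}(b\otimes 1_k)=\nu_{\cB_k}(b)$. This is precisely how the paper argues: it never factors $\rho_k$ into three pieces, but computes $\rho_k(b\otimes 1_k\otimes c)=\nu_{\cB_k}(b)\,\mu_{\cC_k}(1_k\otimes c)$ and $\rho_{k+1}(b\otimes 1_k\otimes c)=\nu_{\cB_{k+1}}(b\otimes 1_k)\,\mu_{\cC_{k+1}}(c)$ directly and observes they agree, and likewise for the elements $I_{k-1}\otimes a\otimes I_{n-k}$. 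Your treatment of the boundary cases $k=1,n$ and the telescoping identity are fine as written (no spurious factorization enters there). Rewrite the middle case along those lines and drop the three-fold tensor decomposition of $\rho_k$.
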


\begin{proof}
Let $k$ with $2 \leq k \leq n-1$ be given. Then for any $b \in \cB_k$
and $c \in \cC_{k+1}$ we have
\begin{align*}
\phi_k(b \otimes 1_k \otimes c) & = \rho_k(b \otimes 1_k \otimes c)
- \rho_{k+1}(b \otimes 1_k \otimes c)    \\
& = \nu_{\cB_k}(b)\mu_{\cC_k}(1_k \otimes c)
-  \nu_{\cB_{k+1}}(b \otimes 1_k )\mu_{\cC_{k+1}}(c)   \\
& =0   .
\end{align*}
But for any $a \in \cA_k$ we have
\begin{align*}
\phi_k(I_{k-1}\otimes a & \otimes I_{n-k})  = \rho_k(I_{k-1}\otimes a \otimes I_{n-k})
- \rho_{k+1}(I_{k-1}\otimes a \otimes I_{n-k})    \\
& =   \nu_{\cB_k}(I_{k-1})
\mu_{\cC_k}(a \otimes I_{n-k})
-  \nu_{\cB_{k+1}}(I_{k-1} \otimes a )\mu_{\cC_{k+1}}(I_{n-k})   \\
& = \phi(a) .
\end{align*}
The proofs for the other two cases are similar.
\end{proof}

Much as earlier, 
for each $a \in \cA$
we set $L^0(a) = \inf \|a - z1_\cA\|$, where 
$z$ ranges over $\bC$. Then note that just from its definition,
$L_i(a) \leq 2L^0(a)$ for each $i$, so that $L_{qH}(a) \leq 2L^0(a)$. 

\begin{theorem}
\label{diamer}
Let notation be as above. Then for any $a \in \cA$ we have
$|\phi_k(a)| \leq  L_k(a)$. If also $a^* = a$, then
\[
2L^0(a) \leq \sum_1^n L_k(a) \leq nL_{qH}(a)  \leq  2nL^0(a).
\]
In particular, it follows that $L_{qH}$ is a norm on $\cA^0$,  
and that the norms $L_{qH}$ and $L^0$ on $\cA^0$
are equivalent. Furthermore, the diameter of $(\cA, L_{qH})$ is
$\leq n$. 
\end{theorem}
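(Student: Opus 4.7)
The plan is to establish the inequality $|\phi_k(a)| \leq L_k(a)$ first, then cascade it into the four-term inequality chain for self-adjoint $a$, and finally bootstrap to the remaining assertions.

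For the first claim, I would use the two conclusions of Lemma~\ref{lemsum}: that $\phi_k$ annihilates $\tilde\cA_k$, and that $\phi_k$ is the difference of two states and hence has dual-norm at most $2$. Since $\phi_k(c) = 0$ for every $c \in \tilde\cA_k$, we have $\phi_k(a) = \phi_k(a-c)$, so $|\phi_k(a)| \leq \|\phi_k\| \cdot \|a-c\| \leq 2\|a-c\|$. Taking the infimum over $c \in \tilde\cA_k$ gives $|\phi_k(a)| \leq 2\|a\|_{\cA/\tilde\cA_k} = L_k(a)$.

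For the inequality chain in the self-adjoint case, I would combine the first claim with the telescoping identity $\phi = \sum_k \phi_k$. For any pair of states $\mu, \nu$ on $\cA$ and any $a \in \cA$, setting $\phi = \mu - \nu$ yields
\[
|\mu(a) - \nu(a)| = |\phi(a)| \leq \sum_{k=1}^n |\phi_k(a)| \leq \sum_{k=1}^n L_k(a).
\]
For self-adjoint $a$ a standard argument (just spectral theory for self-adjoint elements of a unital C*-algebra, together with the fact that pure states attain the spectral extrema) shows that $2L^0(a) = \sup\{|\mu(a) - \nu(a)| : \mu, \nu \in S(\cA)\}$. Taking the supremum over $\mu, \nu$ on the left-hand side gives $2L^0(a) \leq \sum_k L_k(a)$. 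The middle inequality $\sum_k L_k(a) \leq n L_{qH}(a)$ is immediate from the definition of $L_{qH}$ as a maximum. For the last inequality, I would observe that $1_\cA \in \tilde\cA_k$, so $\bC 1_\cA \subseteq \tilde\cA_k$ and consequently $L_k(a) = 2\|a\|_{\cA/\tilde\cA_k} \leq 2\|a\|_{\cA/\bC 1_\cA} = 2L^0(a)$ for every $a \in \cA$; taking the maximum in $k$ gives $L_{qH}(a) \leq 2L^0(a)$, so $n L_{qH}(a) \leq 2n L^0(a)$.

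The remaining assertions follow by decomposing an arbitrary $a \in \cA$ as $a = b + ic$ with $b = (a+a^*)/2$ and $c = (a-a^*)/(2i)$ self-adjoint. Since each $L_k$ is a $*$-seminorm (the subalgebra $\tilde\cA_k$ is $*$-closed, so the quotient norm is $*$-invariant), so is $L_{qH}$; likewise $L^0$ is a $*$-seminorm. Hence $L_{qH}(b), L_{qH}(c) \leq L_{qH}(a)$ and similarly for $L^0$. Applying the self-adjoint bound $2L^0 \leq n L_{qH}$ to $b$ and $c$ separately gives $L^0(a) \leq L^0(b) + L^0(c) \leq n L_{qH}(a)$, while $L_{qH}(a) \leq 2L^0(a)$ holds universally from the previous paragraph; these two estimates give both the equivalence of norms on $\cA^0$ and the norm (not merely seminorm) property of $L_{qH}$, since $L_{qH}(a) = 0$ forces $L^0(b) = L^0(c) = 0$ and hence $a \in \bC 1_\cA$. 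Finally, the self-adjoint inequality $L^0(a) \leq (n/2) L_{qH}(a)$ together with Proposition~\ref{diamet} applied with $r = n/2$ yields the diameter bound $\leq n$.

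The main obstacle I foresee is marshalling the identification $2L^0(a) = \sup\{|\mu(a) - \nu(a)|\}$ for self-adjoint $a$ in a way that cleanly transforms the pointwise telescoping bound into the crucial estimate $2L^0(a) \leq \sum_k L_k(a)$; every other step is assembly, given Lemma~\ref{lemsum} and Proposition~\ref{diamet}.
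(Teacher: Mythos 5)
Your proposal is correct and follows essentially the same route as the paper's proof: the bound $|\phi_k(a)|\leq L_k(a)$ from the two properties of Lemma \ref{lemsum}, the telescoping sum combined with the characterization of $2L^0(a)$ via states attaining the spectral extrema of a self-adjoint $a$, the observation $L_k\leq 2L^0$ from $1_\cA\in\tilde\cA_k$, and Proposition \ref{diamet} with $r=n/2$ for the diameter. Your explicit decomposition $a=b+ic$ into self-adjoint parts merely fills in a detail the paper leaves as a remark when passing from equivalence on self-adjoint elements to equivalence on all of $\cA$.
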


\begin{proof}
Let $\mu$ and $\nu$ be states of $\cA$, and let
$\phi = \mu - \nu$. We apply Lemma \ref{lemsum}
to this $\phi$.
Notice that from the definition of the $\phi_k$'s in
Lemma \ref{lemsum}, $\|\phi_k\| \leq 2$ for each $k$.
We also see that
$\phi_k(\tilde \cA_k) = 0$ for each $k$.
Thus for any $a \in \cA$, if $d \in \tilde \cA_k$ then
\[
|\phi_k(a)| = |\phi_k(a-d)| \leq 2\|a-d\|  .
\]
On taking the infimum over $d$'s on the right side, we find that
$|\phi_k(a)|  \leq L_k(a)$, as desired.
Since also $L_k(a) \leq L_{qH}(a) \leq 2L^0(a)$, we obtain
\[
 \sum_1^n |\phi_k(a)| \leq   \sum_1^n L_k(a) \leq n L_{qH}(a) \leq 2nL^0(a).
\]

\noindent
Now assume that $a^* = a$, and let $\mu$ and $\nu$ 
be states of $\cA$ such that $\mu(a)$
and $\nu(a)$ are the maximum and minimum values of
the spectrum of $a$, so that $2L^0(a) = (\mu - \nu)(a)$. 
Then for $\phi = \mu - \nu$, we
have $\phi(a) = 2L^0(a)$. Since $\phi = \sum \phi_k$, it
follows that  $2L^0(a) \leq  \sum_1^n |\phi_k(a)| $, 
so that $2L^0(a) \leq  \sum_1^n L_k(a)$
as needed.

\noindent
We see that the norms $L_{qH}$ and $L^0$ on $\cA^0$
are equivalent on the self-adjoint part of $\cA$, but this inplies
that they are equivalent on all of $\cA$ (with slightly different
constants). The fact that $2L^0(a) \leq nL_{qH}(a)$ when $a^* = a$
says that the radius of $L_{qH}$ is no more than $n/2$, 
according to Definitiion \ref{raddef}. Thus
the diameter of $(\cA, L_{qH})$ is no more than $n$, generalizing what
happens in the commutative case as seen from Propositions
\ref{bound} and \ref{ordrad}.  
\end{proof}

The following theorem also generalizes what 
happens in the commutative case.

\begin{theorem}
\label{diam}
Let $\cA = \otimes_{i=1}^n \cA_i $ as above, but assume that none
of the $\cA_i$'s are 1-dimensional, i.e. isomorphic to $\bC$. Then
the diameter of $(\cA, L_{qH})$ is $n$.  
\end{theorem}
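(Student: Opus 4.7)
The plan is to establish the matching lower bound diameter $\geq n$; the upper bound is already given by Theorem \ref{diamer}. I will exhibit two states $\mu, \nu$ on $\cA$ and a self-adjoint observable $a \in \cA$ for which $(\mu - \nu)(a) = 2n$ and $L_{qH}(a) \leq 2$, so that $d^{L_{qH}}(\mu,\nu) \geq n$. The construction is a quantum analogue of the classical Hamming-weight observable implicit in Proposition \ref{bound}.

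For each $i$, since $\cA_i$ is not one-dimensional it contains a self-adjoint non-scalar element; after an affine real rescaling I obtain a self-adjoint $a_i \in \cA_i$ with $\sigma(a_i) \subseteq [-1,1]$ and $\{-1, +1\} \subseteq \sigma(a_i)$. Standard C*-algebra theory (Hahn--Banach extension of the characters of $C^*(a_i, 1_i) \cong C(\sigma(a_i))$ at $\pm 1$) then produces states $\mu_i, \nu_i$ on $\cA_i$ with $\mu_i(a_i) = 1$ and $\nu_i(a_i) = -1$. Let $\hat a_i \in \cA$ denote the embedding of $a_i$ into the $i$-th tensor factor, set
\[
a = \sum_{i=1}^n \hat a_i,
\]
and form the product states $\mu = \mu_1 \otimes \cdots \otimes \mu_n$ and $\nu = \nu_1 \otimes \cdots \otimes \nu_n$ on the minimal tensor product $\cA$, which are well-defined by the standard tensor-product theory recalled in Section \ref{alg}.

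The verification then has two halves. For each $k$, the element $a - \hat a_k = \sum_{i \neq k} \hat a_i$ lies in $\tilde \cA_k$, so $\|a\|_{\cA/\tilde\cA_k} = \|\hat a_k\|_{\cA/\tilde\cA_k}$; and using $t \cdot 1_\cA \in \tilde\cA_k$ as competitors in the quotient-norm infimum yields
\[
\|\hat a_k\|_{\cA/\tilde\cA_k} \leq \inf_{t \in \bC}\|a_k - t 1_k\| = L^0(a_k) = 1,
\]
so $L_k(a) \leq 2$ for every $k$ and hence $L_{qH}(a) \leq 2$. On the other hand, the defining property of product states gives $\mu(\hat a_i) = \mu_i(a_i) = 1$ and $\nu(\hat a_i) = \nu_i(a_i) = -1$, whence $\mu(a) = n$, $\nu(a) = -n$, and $(\mu - \nu)(a) = 2n$. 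Combining these, $d^{L_{qH}}(\mu,\nu) \geq 2n/L_{qH}(a) \geq n$, which together with Theorem \ref{diamer} gives the diameter exactly $n$. No step poses a serious obstacle; the only care needed is in the Hahn--Banach extension and in the appeal to the minimal tensor product for product states. Crucially, only the easy upper estimate on $\|\hat a_k\|_{\cA/\tilde\cA_k}$ is required, so no slice-map or matching lower-bound argument is needed.
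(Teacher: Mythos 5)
Your proof is correct and follows essentially the same route as the paper: the same witness $a=\sum_i \hat a_i$ built from self-adjoint elements with $\pm 1$ in their spectra, the same product states $\mu=\otimes_i\mu_i$ and $\nu=\otimes_i\nu_i$, and the same quotient-norm estimate $L_k(a)=2\|\hat a_k\|_{\cA/\tilde\cA_k}\le 2$. If anything, your bookkeeping of the factor of $2$ in $L_i=2\|\cdot\|_{\cA/\tilde\cA_i}$ is the internally consistent one: the paper asserts $L_i(c_i)\le 1$ and deduces $d^{L_{qH}}(\mu,\nu)\ge 2n$, which does not match the claimed diameter $n$, whereas your $L_{qH}(a)\le 2$ together with $(\mu-\nu)(a)=2n$ gives $d^{L_{qH}}(\mu,\nu)\ge n$, squaring exactly with the upper bound from Theorem \ref{diamer}; your direct evaluation $\mu(a)=n$, $\nu(a)=-n$ also bypasses the paper's appeal to Lemma \ref{lemsum}.
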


\begin{proof}
Since none of the $\cA_i$'s is 1-dimensional, we can choose for
each $i$ a self-adjoint element $a_i \in \cA_i$ such that $\|a_i\| = 1$,
and both +1 and -1 are in its spectrum. Let $\mu_i$ and $\nu_i$
be states on $\cA_i$ such that $\mu_i(a_i) = 1$ and $\nu(a_i)= -1$.
Let
\[
c_1 = a_1 \otimes I_{n-1} , \ \ c_2 =   I_1 \otimes a_2 \otimes I_{n-2}, \ \ \dots , \ \
c_n =I_{n-1} \otimes a_n     ,
\]
so that each $c_i$ is in the subalgebra $\cA_i$ of $\cA$.
Notice that $L_i(c_i) \leq 1$ for each $i$ since $\|c_i\| = 1$. 
Let $a = \sum_{i=1}^n c_i$. Then $L_i(a) \leq 1$ for each $i$ since
each $c_i$ is in the subalgebra $\cA_i$ of $\cA$. 
Thus $L_{qH}(a) \leq 1$.

Let $\mu = \otimes_{i= 1}^n  \mu_i$ and $\nu = \otimes_{i= 1}^n  \nu_i$,
product states on $\cA$. Then $\mu_{\cA_i} = \mu_i$ and  
$\nu_{\cA_i} = \nu_i$, so that  $\mu_{\cA_i}(a_i) = 1$ and 
$\nu_{\cA_i}(a_i) = -1$, for each $i$. 
Set $\phi = \mu - \nu$, and define
the $\phi_k$'s as in the above Lemma. Then by the properties stated
in the lemma, for each $i$
\[
\phi_i(a) = \phi_{A_i}(c_i) = (\mu - \nu)(a_i) = 2   ,
\]
so that $\phi(a) = 2n$. Since $\phi = \mu - \nu$, and $L_{qH}(a) \leq 1$,
this means that $d^L_{qH}(\mu, \nu) \geq 2n$, and so the diameter
of $S(\cA)$ is at least $2n$. Then from Propositiion \ref{diamet} 
and Theorem \ref{diamer} we see
that the diameter of  $(\cA, L_{qH})$ is exactly $n$.
\end{proof}


\section{The Kantorovich-Wasserstein metric on quantum Hamming states}
\label{states}

We continue to take the $\cA_i$'s  to be 
arbitrary unital C*-algebras, and as above we set 
$\cA = \otimes_i \cA_i $.
We equip $\cA$ with its quantum Hamming seminorm 
$L_{qH}$ defined above.
We will now use Proposition \ref{dualanorm} to 
express the corresponding ordinary metric on $S(\cA)$
from $L_{qH}$ in terms of the $L_i$'s, where $L_i$ is still defined
by equation \ref{defLi}.
View $L_{qH}$ as a norm
on $\cA^0$.
On $\cA^0$ we also have the norm $L^0$, but as seen in
Theorem \ref{diamer} these two norms are equivalent, and so define
the same topology on $\cA^0$. 
Recall that we denote the dual space of 
$\cA^0$ by $M^0(\cA)$. 
Let $\| \cdot \|_{qH}$ be the dual
norm on $M^0(\cA)$ determined  by $L_{qH}$
much as in equation \eqref{dnorm}, that is 
\[
\|\phi\|_{qH} = \sup \{|\phi(a)|: L_{qH}(a) \leq 1\}   ,
\]
where our notation does not indicate whether we view
$a$ as an element of $\cA$ or of $\cA^0$.
For every $i \in I$
we can view $L_i$ as a norm on $\cA/\tilde \cA_i$. 
The dual space of  $\cA/ \tilde \cA_i$.
can be identified with $\tilde \cA_i^\perp$, 
which is just the subspace of $M^0(\cA)$ consisting of functionals
that take value 0 on $\tilde \cA_i$. 
We denote the norm on $\tilde \cA_i^\perp$ that is
dual to $L_i$ by $\|\cdot \|_i$ . 
Since $L_{qH}(a) = \max\{L_i(a): i \in I\}$ by definition,
we are in position to apply Proposition \ref{dualanorm}
to obtain:

\begin{theorem}
Let notation be as above. Then for any $\phi \in M^0(\cA)$ we have 
\[
\|\phi\|_{qH} = \inf \{ \sum_i \|\phi_i\|_i : \phi_i \in \tilde \cA_i^\perp
\ \ \mathrm{and} \ \ \phi = \sum_i \phi_i \} .
\]
\end{theorem}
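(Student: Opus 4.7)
The plan is to apply Proposition \ref{dualanorm} directly, taking $W = \cA^0 = \cA/\bC 1_\cA$ and the finite collection of seminorms $\{\rho_i = L_i : i \in I\}$. Before doing so I would verify the two hypotheses of that proposition in the present setting. First, each $L_i$ descends from $\cA$ to a seminorm on $\cA^0$, because $1_\cA \in \tilde \cA_i$ forces $L_i(1_\cA) = 2\|1_\cA\|_{\cA/\tilde \cA_i} = 0$, so $L_i$ is constant on the cosets modulo $\bC 1_\cA$. Second, the family $\{L_i\}$ must collectively separate the points of $\cA^0$, and this is exactly the content of Theorem \ref{diamer}, which asserts that $L_{qH} = \max_i L_i$ is a norm (not merely a seminorm) on $\cA^0$, equivalent to $L^0$. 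Thus $L_i(a) = 0$ for all $i$ forces $L_{qH}(a) = 0$ and hence $a = 0$ in $\cA^0$.

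With the hypotheses verified, the remaining step is to translate the conclusion of Proposition \ref{dualanorm} into the present notation. The norm $\|\cdot\|_W$ appearing there is precisely $L_{qH}$ by equation \eqref{hamdef}. The null space $N_i$ of $L_i$ inside $\cA^0$ is $\tilde\cA_i/\bC 1_\cA$, so that the quotient $\cA^0/N_i$ identifies isometrically with $\cA/\tilde\cA_i$ equipped with $L_i$. The dual space $W' = (\cA^0)'$ is $M^0(\cA)$ by Notation \ref{ncpd}, and the annihilator $N_i^\perp$ of $N_i$ inside $M^0(\cA)$ consists of those $\phi \in M^0(\cA)$ that vanish on $\tilde\cA_i/\bC 1_\cA$; since any $\phi \in M^0(\cA)$ already kills $1_\cA$, this set coincides with $\tilde\cA_i^\perp$ as used in the statement. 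Finally, the dual norm $\rho_i^*$ on $N_i^\perp$ is by construction the norm $\|\cdot\|_i$ on $\tilde\cA_i^\perp$ dual to $L_i$ on $\cA/\tilde\cA_i$.

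Substituting all of these identifications into the conclusion of Proposition \ref{dualanorm} yields the desired formula for $\|\phi\|_{qH}$ as an infimum of sums $\sum_i \|\phi_i\|_i$ over decompositions $\phi = \sum_i \phi_i$ with $\phi_i \in \tilde\cA_i^\perp$. There is no real obstacle here; the genuine work was done earlier, in Proposition \ref{dualanorm} (a general duality fact for finite max-of-seminorms structures) and in Theorem \ref{diamer} (which supplies the crucial point-separation property needed to invoke it). The one place where care is required is in consistently distinguishing $L_i$ as a seminorm on $\cA$ from $L_i$ as a norm on $\cA/\tilde\cA_i$, so that the annihilator and its dual norm are identified correctly.
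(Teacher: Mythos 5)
Your proposal is correct and follows essentially the same route as the paper: the paper likewise obtains the theorem by viewing $L_{qH}$ as a norm on $\cA^0$ (using Theorem \ref{diamer} for equivalence with $L^0$, hence that the $L_i$'s separate points and that the dual is $M^0(\cA)$), identifying the dual of $\cA/\tilde\cA_i$ with $\tilde\cA_i^\perp$, and invoking Proposition \ref{dualanorm}. Your explicit verification of the hypotheses is slightly more detailed than the paper's prose but adds nothing different in substance.
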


For any $\phi \in M^0(\cA)$ define $\phi^*$ by $\phi^*(a) = \overline{\phi(a^*)}$.
Note that $\|\phi^*\|_{qH} = \|\phi\|_{qH}$, 
so that  $\|(\phi + \phi^*)/2\|_{qH} \leq \|\phi\|_{qH}$.
Also, if $\phi \in \tilde \cA_i^\perp$ 
then $\phi^* \in \tilde \cA_i^\perp$.
Furthermore, if $\mu, \nu \in S(\cA)$ and we set 
$\phi = \mu - \nu$ then $\phi^* = \phi$.
Consequently, we obtain:  
\begin{corollary}
\label{stated}
Let notation be as above. Then 
for any $\mu, \nu \in S(\cA)$ we have
\[
d_{qH}(\mu, \nu) =  \inf \{ \sum_i \|\phi_i\|_i : \phi_i \in \tilde \cA_i^\perp,
\ \phi_i^* = \phi_i, \ \
 \mathrm{and} \ \ \mu - \nu = \sum_i \phi_i \} .
\]
\end{corollary}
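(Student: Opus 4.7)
The plan is to invoke the preceding theorem, which expresses $\|\phi\|_{qH}$ as an infimum of sums $\sum_i \|\phi_i\|_i$ over \emph{all} decompositions $\phi = \sum_i \phi_i$ with $\phi_i \in \tilde \cA_i^\perp$, and then show that when $\phi = \mu - \nu$ is self-adjoint, this infimum is unchanged if we restrict to self-adjoint decompositions. Since $d_{qH}(\mu, \nu) = \|\mu - \nu\|_{qH}$ by the definition of the metric dual to $L_{qH}$, this will yield the stated formula.

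First I would record the three small facts flagged in the paragraph preceding the statement: (i) the $*$-operation $\phi \mapsto \phi^*$ on $M^0(\cA)$ preserves each subspace $\tilde \cA_i^\perp$, because $\tilde \cA_i$ is a $*$-subalgebra of $\cA$; (ii) the dual norm $\|\cdot\|_i$ is $*$-invariant, since $L_i$, being built from the operator norm modulo the $*$-subalgebra $\tilde \cA_i$, is a $*$-seminorm on $\cA/\tilde \cA_i$; and (iii) $\phi := \mu - \nu$ satisfies $\phi^* = \phi$ because $\mu$ and $\nu$ are states.

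Next comes the symmetrization step. Given an arbitrary decomposition $\phi = \sum_i \phi_i$ with $\phi_i \in \tilde \cA_i^\perp$, use $\phi = \phi^*$ to write
\[
\phi \;=\; \frac{\phi + \phi^*}{2} \;=\; \sum_i \frac{\phi_i + \phi_i^*}{2} \;=\; \sum_i \psi_i,
\]
where $\psi_i := (\phi_i + \phi_i^*)/2$ is self-adjoint and, by (i), still lies in $\tilde \cA_i^\perp$. By the triangle inequality and (ii),
\[
\|\psi_i\|_i \;\leq\; \tfrac{1}{2}\bigl(\|\phi_i\|_i + \|\phi_i^*\|_i\bigr) \;=\; \|\phi_i\|_i,
\]
so $\sum_i \|\psi_i\|_i \leq \sum_i \|\phi_i\|_i$. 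Taking infima on both sides shows that the infimum over self-adjoint decompositions equals the unrestricted infimum, which by the preceding theorem is $\|\phi\|_{qH} = d_{qH}(\mu, \nu)$.

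I do not anticipate a genuine obstacle; the argument is pure symmetrization, running exactly parallel to the self-adjoint reduction already used in Remark \ref{remar} and in Corollary \ref{dualw} for the commutative sum-metric case. The only point meriting a line of care is the $*$-invariance of $\|\cdot\|_i$, which reduces to the identity $L_i(a^*) = L_i(a)$; this in turn holds because $\tilde \cA_i$ is self-adjoint and the operator norm on $\cA$ is $*$-invariant, so for any $c \in \tilde \cA_i$ we have $\|a^* - c\| = \|a - c^*\|$ and $c^*$ again ranges over $\tilde \cA_i$.
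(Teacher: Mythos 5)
Your proof is correct and follows essentially the same route as the paper: the paper's argument is exactly this symmetrization, resting on the three facts that $\phi\mapsto\phi^*$ preserves each $\tilde\cA_i^\perp$, that the dual norms $\|\cdot\|_i$ (and $\|\cdot\|_{qH}$) are $*$-invariant, and that $\mu-\nu$ is self-adjoint. Your added justification of the $*$-invariance of $\|\cdot\|_i$ via $\|a^*-c\|=\|a-c^*\|$ is a detail the paper leaves implicit, but it is the intended reasoning.
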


For the case in which the $\cA_i$'s are full matrix algebras
this is basically the expression that  De Palma et al. take, in definition 7
of \cite{DMTL}, as their \emph{definition} of their
Wasserstein-1
metric on states, which  corresponds to the 
the quantum Hamming distance, 
though they express it in terms of density matrices.

It is an interesting challenge to find efficient methods for calculating
this distance between states. The most common situation in which 
this distance is
used is that in which each $\cA_i$ is $M_2(\bC)$ (a ``qbit") but the size
of $I$ can be quite large, e.g. $|I| = 500$.


\section{Dirac operators for quantum Hamming seminorms}
\label{dirh}

In this section we will assume that the C*-algebras that
we deal with are finite-dimensional. There are generalizations
of some of the results we give here to the case of infinite dimensional
C*-algebras \cite{R24}, but they are more technical,
and so we do not include them here. 

Thus let $\cA$ be a finite dimensional C*-algebra, necessarily 
unital, and let $ \cB$ be a C*-subalgebra of $\cA$ with
$1_\cA \in \cB$. Let $L_q$ be the pull-back to $\cA$ of
the quotient norm on $\cA/ \cB$. We seek to represent 
$L_q$ by a Dirac-type operator. We can construct one
by generalizing the construction used for Propositiion
\ref{project}.

Let $(\cH, \pi)$ be a finite dimensional faithful $*$-representation
of $\cA$, and view $\cA$ as a subalgebra of the algebra
$L^\infty(\cH)$ of all operators on $\cH$, with the operator norm. 
Notice that the distance from an element $a \in \cA$ to $\cB$
is independent of whether $a$ is viewed as an element of
$\cA$ or of $L^\infty(\cH)$. Thus we can, and we will, take 
$\cA$ to be all of $L^\infty(\cH)$.  

Then we let $\tau$ be the unique
tracial state on $\cA = L^\infty(\cH)$, and in the usual way
we define an inner product, $\<\cdot,\cdot\>_\tau$, on $\cA$,  
taken to be linear in the second variable, so that
$\<\xi, \eta\>_\tau = \tau(\xi^* \eta\>$ for all $\xi, \eta
\in \cA$. We denote the corresponding Hilbert-space
norm by $\|\cdot\|_2$.
(With more complicated bookkeeping one
can obtain versions of many of the results below if
$\tau$ is replaced by an arbitrary faithful state on $\cA$.)

Let $a$ be any element of $\cA$. Much as in the proof 
of Proposition \ref{project}, the first basic result of linear
approximation theory tells us that there is a linear
functional, $\phi$, on $\cA$ such that $ \|\phi\| = 1$
and $L_q(a) = \phi(a)$
while $\phi(b)= 0$ for all $b \in \cB$. 
Since $ \cA$ is finite dimensional, $\phi$ is 
continuous for the trace-norm, and so is represented 
by an element of $\cA$, which we will denote
by $f_\phi$. With that understanding, 
\[
\phi(c) = \< f_\phi, c\>_\tau = \tau(f_\phi^*c)
\]
for all $c \in \cA$. Since $\|\phi\| = 1$, we have $\|f_\phi\|_\tau = 1$,
where $\|\cdot\|_\tau$ is the trace-norm. Let $f_\phi = u|f_\phi|$ be
the polar decomposition of $f_\phi$, so that $u$ is a partial isometry
on $\cH$.
Let $\xi = |f_\phi|^{1/2}$ and $\eta = u|f_\phi|^{1/2}$, and notice
that $\xi^* = \xi$. Notice also that $\|\xi\|_2 = 1 = \|\eta\|_2$.
Most important, for any $c \in \cA$ we have
\[
\phi(c) =  \tau(f_\phi^*c) = \tau(\xi\eta^*c) = \<\eta, c\xi\>_\tau  .
\]
Since $\phi(b) = 0$ for all $b \in \cB$, we have
$\<\eta, b\xi\>_\tau = 0 $ for all $b \in \cB$. Let $P$
be the projection onto the subspace $\cB \eta$. Since this
subspace is clearly invariant under the (left) action of
$\cB$, the projection $P$ commutes with this action.
That is, $P$ is in the commutant of the action by $\cB$.
It is also clear that we have $P\eta = \eta$ while $P\xi = 0$. 
Consequently, for any $c \in \cA$ we have
\[
\< \eta, [P, c]\xi \>_\tau =  \<\eta, Pc\xi\>_\tau - \<\eta, cP\xi\>_\tau
=  \<\eta, c\xi\>_\tau = \phi(c)   . 
\]
In particular, we have
\[
L_q(a) = \phi(a) = \< \eta, [P, a]\xi \>_\tau   .
\]
Consequently, $L_q(a) \leq \| [P,a] \|$. But for any $c \in \cA$
and $b \in  \cB$ we have, since $P$ commutes with the
action of $\cB$,  
\[
\| [P,c] \| = \| [P, c-b] \| = \| [P - (1/2)I, c-b] \| \leq \|c-b\|  . 
\]
It follows that $\| [P,c] \| \leq L_q(c)$ for all $c \in \cA$,
and so, in particular, 
\[
L_q(a) = \| [P,a] \| .
\]
We see that we have obtained the following non-commutative
version of Proposition \ref{project}.

\begin{theorem}
\label{commutant}
Let $\cA = L^\infty(\cH)$ for a finite-dimensional Hilbert space
$\cH$, and let $\cB$ be a C*-subalgebra of $\cA$ that contains
$1_\cA$. Let $L_q$ be the pull-back to $\cA$ of the quotient norm
on $\cA/\cB$. Then for every $a \in \cA$ we have
\[
L_q(a) = \sup \{\| [P,a] \|: P \in \cP_\cB \}
\]
where $\cP_\cB$ is the set of projections in the
commutant of $\cB$ in $\cA$ of
rank no greater than $\dim(\cB)$.
\end{theorem}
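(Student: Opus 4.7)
The plan is to realize $L_q(a)$ as a value of a norming linear functional on $\cA$ that vanishes on $\cB$, and then to build the desired projection $P$ by polar-decomposing the trace-representing vector of that functional, in close analogy with the argument preceding Proposition \ref{project} but with the subalgebra $\cB$ in place of $\bC 1_\cA$.

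First, because $L_q$ is the pull-back of the quotient norm on $\cA/\cB$, a standard Hahn--Banach argument produces a norm-one linear functional $\phi \in \cA^*$ that annihilates $\cB$ and satisfies $\phi(a) = L_q(a)$. Using the inner product $\<\cdot,\cdot\>_\tau$ on the finite-dimensional space $\cA$, I would represent $\phi$ by a unique $f_\phi \in \cA$, take the polar decomposition $f_\phi = u|f_\phi|$, and set $\xi = |f_\phi|^{1/2}$ (self-adjoint) and $\eta = u|f_\phi|^{1/2}$. Cyclicity of $\tau$ then yields $\phi(c) = \<\eta, c\xi\>_\tau$ for all $c \in \cA$, and a normalization check gives $\|\xi\|_2 = \|\eta\|_2 = 1$.

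Next, I would let $P$ be the orthogonal projection of $\cA$, regarded as a Hilbert space via $\<\cdot,\cdot\>_\tau$, onto the subspace $\cB\eta$. Because $\cB$ is a unital $*$-subalgebra of $\cA$, both $\cB\eta$ and its orthogonal complement are invariant under left multiplication by $\cB$, so $P$ commutes with the left action of $\cB$. Moreover $P\eta = \eta$ (since $1_\cA \in \cB$), and $P\xi = 0$: the hypothesis $\phi(b) = 0$ for $b \in \cB$ gives $\<\eta, b\xi\>_\tau = 0$, and conjugating while replacing $b$ by $b^* \in \cB$ yields $\<\xi, b\eta\>_\tau = 0$ for every $b \in \cB$, i.e.\ $\xi \perp \cB\eta$. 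Substituting into the commutator expansion then produces $\phi(c) = \<\eta, [P,c]\xi\>_\tau$ for every $c \in \cA$, so Cauchy--Schwarz gives $L_q(a) = |\phi(a)| \leq \|[P,a]\|$. For the reverse inequality, I would exploit $[P,b]=0$ for $b \in \cB$ to rewrite $[P,c] = [P - \tfrac{1}{2}I,\, c-b]$ for any $b\in\cB$; since $\|P - \tfrac{1}{2}I\| = \tfrac{1}{2}$, this gives $\|[P,c]\| \leq \|c-b\|$, and taking the infimum over $b \in \cB$ yields $\|[P,c]\| \leq L_q(c)$, hence in particular $L_q(a) = \|[P,a]\|$.

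The rank bound is then automatic, since $\cB\eta$ is the image of the linear map $b \mapsto b\eta$ from $\cB$ into the Hilbert space, giving $\mathrm{rank}(P) = \dim(\cB\eta) \leq \dim(\cB)$. The step I expect to require the most care is the identification of the projection $P$: it is constructed as a projection on the Hilbert space $\cA$ in its GNS representation for $\tau$, and one must verify that this fits the theorem's phrase ``projection in the commutant of $\cB$ in $\cA$'' under the appropriate identification. Beyond that, the commutation of $P$ with $\cB$ and the rank estimate are easy consequences of $\cB$ being a unital $*$-subalgebra, and the whole argument is really the scheme used for Proposition \ref{project} transported to the non-commutative setting.
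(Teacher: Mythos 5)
Your proposal is correct and follows essentially the same route as the paper's own proof: a Hahn--Banach norming functional annihilating $\cB$, its representation $\phi(c)=\<\eta,c\xi\>_\tau$ via polar decomposition of the trace-representing element, the projection onto $\cB\eta$, and the $\|P-\tfrac12 I\|=\tfrac12$ trick for the reverse inequality. The subtlety you flag at the end --- that $P$ is constructed as a projection on the GNS space $L^2(\cA,\tau)$ rather than literally in the commutant of $\cB$ inside $\cA$ acting on $\cH$ --- is present in the paper's proof as well, so your account matches it faithfully.
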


This theorem is related to proposition 2.1 of \cite{Chr},
which, as explained there, is related to the Arveson
distance formula for nest algebras in \cite{Arv}.

To obtain a single Dirac-type operator for the situation,
for each $P \in  \cP_\cB$ let $\cH_P$ be a copy of
$\cH$, with the same representation of $\cA$ on
it, and let $\cK = \oplus \{\cH_P: P \in  \cP_\cB\}$, with 
corresponding direct-sum representation $\pi$ of $\cA$
on $\cK$. Let $D = \oplus \{ P: P \in  \cP_\cB\}$, 
a bounded operator on $\cK$. Then we clearly have
\[
L_q(a) = \| [D, \pi(a)]\|
\]
for all $a \in \cA$. Alternatively, $ \cP_\cB$, with metric from
the operator norm, is a compact space, and we can choose
a Borel measure of full support on it, and, much as in 
the comments after the proof of Proposition \ref{project}, we can
then form the Hilbert space of $\cH$-valued functions on
it. We then let $D$ be the operator on this Hilbert space
corresponding to the ``tautological'' function that assigns
to each point $P$ of $ \cP_\cB$ the value $P$. 
Then $D$ will serve as a Dirac operator for $L_q$.

It is easly seen that any seminorm that is obtained from
a Dirac operator in the way used above, is strongly
Leibniz, as defined in Definition \ref{deflip}.

It is an interesting question as to whether $L_q$ can
be determined by a Dirac-type operator on a finite-dimensional
Hilbert space. I suspect it can not in most cases, but I have
no proof of this.

We can apply Theorem \ref{commutant} to the quantum Hamming 
metrics $L_{qH}$, where $\cA = \otimes \cA_i$, with the
assumption that all the $\cA_i$ are finite-dimensional. 
For each $i$
we take $\tilde \cA_i$ as $\cB$ in that theorem. The commutant
of this $\cB$ will be exactly $\cA_i$. Thus:
\begin{corollary} 
With notation as above, for each $a \in \cA  = \otimes \cA_i$ we have
\[
L_{qH}(a) = \max \{L_i(a): i \in I\}
\]
where for each $i$ we have
\[
L_i(a) = \sup \{\| [P,a] \|: P \in \cP_{\cA_i} \},
\]
where the $\cP_{\cA_i} $ are constructed as in Theorem \ref{commutant}.
\end{corollary}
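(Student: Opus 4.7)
The plan is to deduce both equalities from material already in place. The first equation $L_{qH}(a) = \max\{L_i(a): i \in I\}$ is literally the defining formula \eqref{hamdef}, so all the content of the corollary lies in the commutator expression for each individual $L_i$.

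For this commutator formula I would apply Theorem \ref{commutant} directly with the ambient algebra taken to be $\cA = \otimes_j \cA_j$, represented faithfully on the tensor product of faithful finite-dimensional representations of the $\cA_j$'s, and with the subalgebra in the theorem chosen to be $\cB = \tilde\cA_i$ viewed as a unital C*-subalgebra of $\cA$ via the natural inclusion. Theorem \ref{commutant} then produces
\[
\|a\|_{\cA/\tilde\cA_i} = \sup\{\|[P,a]\|: P \in \cP_{\tilde\cA_i}\},
\]
where $\cP_{\tilde\cA_i}$ is the set of projections in the commutant of $\tilde\cA_i$ inside $\cA$ of rank at most $\dim(\tilde\cA_i)$.

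The remaining structural step is to identify this commutant with $\cA_i$, which is what justifies the notation $\cP_{\cA_i}$ in the statement. In the main intended setting where each $\cA_j$ is a full matrix algebra $M_{n_j}(\bC)$, this is the standard tensor-product computation: the commutant of $1_i \otimes \tilde\cA_i$ inside $\cA_i \otimes \tilde\cA_i$ is precisely $\cA_i \otimes 1_{\tilde\cA_i}$, because each factor is a full matrix algebra and hence has trivial center. Combining this identification with the definition $L_i(a) = 2\|a\|_{\cA/\tilde\cA_i}$ from \eqref{defLi} gives the commutator expression for $L_i$, and taking the maximum over $i \in I$ then reproduces the formula for $L_{qH}$ via \eqref{hamdef}.

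The only point that requires care is the factor-of-two bookkeeping between the definition $L_i = 2\|\cdot\|_{\cA/\tilde\cA_i}$ and the plain quotient-norm equality produced by Theorem \ref{commutant}; I would either restore the factor explicitly in the statement or absorb it into the set $\cP_{\cA_i}$ (for example by rescaling the projection by $P \mapsto 2P - I$, which does not change the commutator). Once that normalization is settled, no new ideas are required: the proof reduces to a direct application of Theorem \ref{commutant}, the elementary commutant computation for the minimal C*-tensor product of full matrix algebras, and the definition \eqref{hamdef} of $L_{qH}$.
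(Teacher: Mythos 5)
Your proof is correct and follows exactly the paper's own route: apply Theorem \ref{commutant} with $\cB = \tilde\cA_i$, identify the commutant of $\tilde\cA_i$ in $\cA$ with $\cA_i$, and invoke the definition \eqref{hamdef} of $L_{qH}$. Your factor-of-two observation is a legitimate catch (Theorem \ref{commutant} literally yields $\|a\|_{\cA/\tilde\cA_i}$ rather than $L_i(a) = 2\|a\|_{\cA/\tilde\cA_i}$ from \eqref{defLi}), but note that the substitution $P \mapsto 2P - I$ \emph{does} change the commutator --- it doubles it, since $[2P - I, a] = 2[P,a]$ --- and that doubling is precisely why it absorbs the missing factor.
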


One can then express this in terms of a consolidated Dirac operator
along the lines discussed above. As seen earlier, this means
that $L_{qH}$ is strongly Leibniz.




}    

\end{document}